\newtheoremstyle{mio}%
	{}{} 
	{\itshape}{} 
	{\bfseries}{.}{ } 
	{#1 #2\thmnote{\mdseries~(\scshape #3)}} 
\theoremstyle{mio}
\newtheorem{teor}{Theorem}[section]
\newtheorem{cor}[teor]{Corollary}
\newtheorem{prop}[teor]{Proposition}
\newtheorem{lemma}[teor]{Lemma}
\newtheoremstyle{definition2}%
	{}{} 
	{}{} 
	{\bfseries}{.}{ } 
	{#1 #2\thmnote{\mdseries~ #3}} 
\theoremstyle{definition2}
\newcommand{\Spec}{\mathrm{Spec}}
\DeclareMathOperator{\Max}{Max}
\newcommand{\Zar}{\mathrm{Zar}}
\newcommand{\Over}{\mathrm{Over}}
\newcommand{\comp}{\mathrm{comp}}
\newcommand{\B}{\mathcal{B}}
\newcommand{\Kr}{\mathrm{Kr}}
\newcommand{\Zarmin}{\Zar_{\mathrm{min}}}
\DeclareMathOperator{\trdeg}{trdeg}
\newcommand{\insQ}{\mathbb{Q}}
\title{When the Zariski space is a Noetherian space}
\author{Dario Spirito}
\date{\today}
\address{Dipartimento di Matematica e Fisica, Universit\`a degli Studi ``Roma Tre'', Roma, Italy}
\email{spirito@mat.uniroma3.it}
\subjclass[2010]{13F30, 13A15, 13A18}
\keywords{Zariski space; Noetherian space; pseudo-valuation domains}
\begin{document}
\begin{abstract}
We characterize when the Zariski space $\Zar(K|D)$ (where $D$ is an integral domain, $K$ is a field containing $D$ and $D$ is integrally closed in $K$) and the set $\Zarmin(L|D)$ of its minimal elements are Noetherian spaces.
\end{abstract}

\maketitle

\section{Introduction}
The \emph{Zariski space} $\Zar(K|D)$ of the valuation ring of a field $K$ containing a subring $D$ was introduced by O. Zariski (under the name \emph{abstract Riemann surface}) during its study of resolution of singularities \cite{zariski_sing,zariski_comp}. In particular, he introduced a topology on $\Zar(K|D)$ (which was later called \emph{Zariski topology}) and proved that it makes $\Zar(K|D)$ into a compact space \cite[Chapter VI, Theorem 40]{zariski_samuel_II}. Later, the Zariski topology on $\Zar(K|D)$ was studied more carefully, showing that it is a \emph{spectral space} in the sense of Hochster \cite{hochster_spectral}, i.e., that there is a ring $R$ such that the spectrum of $R$ (endowed with the Zariski topology) is homeomorphic to $\Zar(K|D)$ \cite{dobbs_fedder_fontana,fontana_krr-abRs,fifolo_transactions}. This topology has also been used to study representations of an integral domain by intersection of valuation rings \cite{olberding_noetherianspaces,olberding_affineschemes,olberding_topasp} and, for example, in real and rigid algebraic geometry \cite{hub-kneb,schwartz-compactification}.

In \cite{ZarNoeth}, it was shown that in many cases $\Zar(D)$ is not a Noetherian space, i.e., there are subspaces of $\Zar(D)$ that are not compact. In particular, it was shown that $\Zar(D)\setminus\{V\}$ (where $V$ is a minimal valuation overring of $D$) is often non-compact: for example, this happens when $\dim(V)>2\dim(D)$ \cite[Proposition 4.3]{ZarNoeth} or when $D$ is Noetherian and $\dim(V)\geq 2$ \cite[Corollary 5.2]{ZarNoeth}.

In this paper, we study integral domains such that $\Zar(D)$ is a Noetherian space, and, more generally, we study when the Zariski space $\Zar(K|D)$ is Noetherian. We show that, if $D=F$ is a field, then $\Zar(K|F)$ can be Noetherian only if the transcendence degree of $K$ over $F$ is at most 1 and, when $\trdeg_FK=1$, we characterize when this happens in terms of the extensions of the valuation domains of $F[X]$, where $X$ is an element of $K$ transcendental over $F$ (Proposition \ref{prop:trdeg1}). In Section \ref{sect:domain}, we study the case where $K$ is the quotient field of $D$: we first consider the local case, showing that if $\Zar(D)$ is Noetherian then $D$ must be a pseudo-valuation domain (Theorem \ref{teor:pvd}) and, subsequently, we globalize this result to the non-local case, showing that $\Zar(D)$ is Noetherian if and only if so are $\Spec(D)$ and $\Zar(D_M)$, for every maximal ideal $M$ of $D$ (Theorem \ref{teor:global} and Corollary \ref{cor:global}). We also prove the analogous results for the set $\Zarmin(K|D)$ of the minimal elements of $\Zar(K|D)$.

\section{Background}
\subsection{Overrings and the Zariski space}
Let $D$ be an integral domain and let $K$ be a ring containing $D$. We define $\Over(K|D)$ as the set of rings contained between $D$ and $K$. The \emph{Zariski topology} on $\Over(K|D)$ is the topology having, as a subbasis of closed sets, the sets in the form
\begin{equation*}
\B(x_1,\ldots,x_n):=\{V\in\Over(K|D)\mid x_1,\ldots,x_n\in V\},
\end{equation*}
as $x_1,\ldots,x_n$ range in $K$. If $K$ is the quotient field of $D$, an element of $\Over(K|D)$ is called an \emph{overring} of $D$.

If $K$ is the quotient field of $D$, a subset $X\subseteq\Over(K|D)$ is a \emph{locally finite family} if every $x\in D$ (or, equivalently, every $x\in K$) is a non-unit in only finitely many $T\in\Over(K|D)$.

If $K$ is a field containing $D$, the \emph{Zariski space} of $D$ in $K$ is the set of all valuation domains containing $D$ and whose quotient field is $K$; we denote it by $\Zar(K|D)$. The Zariski topology on $\Zar(K|D)$ is simply the Zariski topology inherited from $\Over(K|D)$. If $K$ is the quotient field of $D$, then $\Zar(K|D)$ will simply be denoted by $\Zar(D)$, and its elements are called the \emph{valuation overrings} of $D$.

Under the Zariski topology, $\Zar(K|D)$ is compact \cite[Chapter VI, Theorem 40]{zariski_samuel_II}.

We denote by $\Zarmin(K|D)$ the set of minimal elements of $\Zar(K|D)$, with respect to containment. If $V$ is a valuation domain, we denote by $\mathfrak{m}_V$ its maximal ideal. Given $X\subseteq\Zar(D)$, we define
\begin{equation*}
X^\uparrow:=\{V\in\Zar(D)\mid V\supseteq W\text{~for some~}W\in X\}.
\end{equation*}
Since a family of open sets is a cover of $X$ if and only if it is a cover of $X^\uparrow$, we have that $X$ is compact if and only if $X^\uparrow$ is compact.

If $X$ is a subset of $\Zar(D)$, we denote by $A(X)$ the intersection $\bigcap\{V\mid V\in X\}$, called the \emph{holomorphy ring} of $X$ \cite{roquette-holomorphy}. Clearly, $A(X)=A(X^\uparrow)$.

The \emph{center map} is the application
\begin{equation*}
\begin{aligned}
\gamma\colon\Zar(K|D) & \longrightarrow \Spec(D)\\
V & \longmapsto \mathfrak{m}_V\cap D.
\end{aligned}
\end{equation*}
If $\Zar(K|D)$ and $\Spec(D)$ are endowed with the respective Zariski topologies, the map $\gamma$ is continuous (\cite[Chapter VI, \textsection 17, Lemma 1]{zariski_samuel_II} or \cite[Lemma 2.1]{dobbs_fedder_fontana}), surjective (this follows, for example, from \cite[Theorem 5.21]{atiyah} or \cite[Theorem 19.6]{gilmer}) and closed \cite[Theorem 2.5]{dobbs_fedder_fontana}.

In studying $\Zar(K|D)$, it is usually enough to consider the case where $D$ is integrally closed in $K$; indeed, if $\overline{D}$ is the integral closure of $D$ in $K$, then $\Zar(K|D)=\Zar(K|\overline{D})$.

\subsection{Noetherian spaces}
A topological space $X$ is \emph{Noetherian} if its open sets satisfy the ascending chain condition, or equivalently if all its subsets are compact. If $X=\Spec(R)$ is the spectrum of a ring, then $X$ is a Noetherian space if and only if $R$ satisfies the ascending chain condition on radical ideals; in particular, the spectrum of a Noetherian ring is always a Noetherian space. If $\Spec(R)$ is Noetherian, then every ideal of $R$ has only finitely many minimal primes (see e.g. the proof of \cite[Chapter 4, Corollary 3, p.102]{bourbaki_ac} or \cite[Chapter 6, Exercises 5 and 7]{atiyah}).

Every subspace and every continuous image of a Noetherian space is again Noetherian; in particular, if $\Zar(D)$ is Noetherian then so are $\Zarmin(D)$ and $\Spec(D)$ \cite[Proposition 4.1]{ZarNoeth}.

\subsection{Kronecker function rings}
Let $K$ be the quotient field of $D$. For every $V\in\Zar(D)$, let $V^b:=V[X]_{\mathfrak{m}_V[X]}\subseteq K(X)$. If $\Delta\subseteq\Zar(D)$, the \emph{Kronecker function ring} of $D$ with respect to $\Delta$ is
\begin{equation*}
\Kr(D,\Delta):=\bigcap\{V^b\mid V\in\Delta\};
\end{equation*}
we denote $\Kr(D,\Zar(D))$ simply by $\Kr(D)$.

The ring $\Kr(D,\Delta)$ is always a B\'ezout domain whose quotient field is $K(X)$, and, if $\Delta$ is compact, the intersection map $W\mapsto W\cap K$ establishes a homeomorphism between $\Zar(\Kr(D,\Delta))$ and the set $\Delta^\uparrow$ \cite{dobbs_fedder_fontana,fontana_krr-abRs,fifolo_transactions}. Since $\Kr(D,\Delta)$ is a Pr\"ufer domain, furthermore, $\Zar(\Kr(D,\Delta))$ is homeomorphic to $\Spec(\Kr(D,\Delta))$; hence, $\Spec(\Kr(D,\Delta))$ is homeomorphic to $\Delta^\uparrow$, and asking if $\Zar(D)$ is Noetherian is equivalent to asking if $\Spec(\Kr(D))$ is Noetherian or, equivalently, if $\Kr(D)$ satisfies the ascending chain condition on radical ideals.

See \cite[Chapter 32]{gilmer} or \cite{fontana_loper} for general properties of Kronecker function rings.

\subsection{Pseudo-valuation domains}
Let $D$ be an integral domain with quotient field $K$. Then, $D$ is called a \emph{pseudo-valuation domain} (for short, \emph{PVD}) if, for every prime ideal $P$ of $D$, whenever $xy\in P$ for some $x,y\in K$, then at least one between $x$ and $y$ is in $P$. Equivalently, $D$ is a pseudo-valuation domain if and only if it is local and its maximal ideal $M$ is also the maximal ideal of some valuation overring $V$ of $D$ (called the valuation domain \emph{associated} to $D$) \cite[Corollary 1.3 and Theorem 2.7]{pvd}. If $D$ is a valuation domain, then it is also a PVD, and the associated valuation ring is $D$ itself.

The prototypical examples of a pseudo-valuation domain that is not a valuation domain is the ring $F+XL[[X]]$, where $F\subseteq L$ is a field extension; its associated valuation domain is $L[[X]]$.

\section{Examples and reduction}
The easiest case for the study of the topology of $\Zar(D)$ is when $D$ is a Pr\"ufer domain, i.e., when $D_M$ is a valuation domain for every maximal ideal $M$ of $D$. 
\begin{prop}
Let $D$ be a Pr\"ufer domain. Then:
\begin{enumerate}[(a)]
\item $\Zar(D)$ is a Noetherian space if and only if $\Spec(D)$ is Noetherian;
\item $\Zarmin(D)$ is Noetherian if and only if $\Max(D)$ is Noetherian.
\end{enumerate}
\end{prop}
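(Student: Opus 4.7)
The plan is to exploit the defining property of Pr\"ufer domains, which forces every valuation overring to be a localization at a prime ideal. Concretely, for $D$ Pr\"ufer the map sending $P\in\Spec(D)$ to $D_P$ is a two-sided inverse to the center map $\gamma\colon\Zar(D)\to\Spec(D)$: indeed, $D_P$ is a valuation domain with maximal ideal $PD_P$ whose center on $D$ is $P$, while for any valuation overring $V$ with center $P:=\mathfrak{m}_V\cap D$ one has $V=D_P$. So $\gamma$ is a continuous bijection.

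From Section~2 we already have that $\gamma$ is continuous and closed. A continuous, closed bijection is a homeomorphism, so $\gamma\colon\Zar(D)\to\Spec(D)$ is a homeomorphism. Part (a) is then immediate, since Noetherianity is a topological invariant.

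For (b), I would observe that on valuation overrings of a Pr\"ufer domain, containment reverses containment of centers: $D_P\subseteq D_Q$ if and only if $Q\subseteq P$. Hence $V\in\Zar(D)$ is minimal with respect to inclusion precisely when $\gamma(V)$ is maximal in $\Spec(D)$. Restricting the homeomorphism $\gamma$ therefore gives a homeomorphism $\Zarmin(D)\cong\Max(D)$ (the topology on $\Max(D)$ being the subspace topology inherited from $\Spec(D)$, which matches the subspace topology $\Zarmin(D)$ inherits from $\Zar(D)$ via $\gamma$), and (b) follows.

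No serious obstacle is expected: the proof is essentially the observation that $\gamma$ is injective when $D$ is Pr\"ufer, together with the properties of $\gamma$ already collected in Section~2. The only point worth double-checking is that closedness of $\gamma$ together with bijectivity really does yield a homeomorphism (it does, since a closed bijection is automatically open), and that the restriction of a homeomorphism to corresponding subspaces remains a homeomorphism.
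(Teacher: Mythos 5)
Your argument is correct and follows essentially the same route as the paper: the paper simply cites the fact that the center map is a homeomorphism for Pr\"ufer domains (Dobbs--Fedder--Fontana, Proposition 2.2) and then notes that minimal valuation overrings correspond to maximal ideals, which is exactly your part (b). The only difference is that you reprove the cited homeomorphism directly (injectivity from $V=D_{\gamma(V)}$ plus the continuity, surjectivity and closedness of $\gamma$), which is a valid and self-contained substitute for the citation.
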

\begin{proof}
Since $D$ is Pr\"ufer, the center map $\gamma:\Zar(D)\longrightarrow\Spec(D)$ is a homeomorphism \cite[Proposition 2.2]{dobbs_fedder_fontana}. This proves the first claim; the second one follows from the fact that the minimal valuation overrings of $D$ correspond to the maximal ideals.
\end{proof}

Another example of a domain that has a Noetherian Zariski space is the pseudo-valuation domain $D:=\insQ+Y\insQ(X)[[Y]]$, where $X,Y$ are indeterminates on $\insQ$, since in this case $\Zar(D)$ can be written as the union of the quotient field of $D$ and two sets homeomorphic to $\Zar(\insQ[X])\simeq\Spec(\insQ[X])$, which are Noetherian; from this, it is possible to build examples of non-Pr\"ufer domain whose Zariski spectrum is Noetherian, and having arbitrary finite dimension \cite[Example 4.7]{ZarNoeth}.

More generally, we have the following result, which is probably well-known.
\begin{lemma}\label{lemma:pullback-Zar}
Let $D$ be an integral domain, and suppose that a prime ideal $P$ of $D$ is also the maximal ideal of a valuation overring $V$ of $D$. Then, the quotient map $\pi:V\longrightarrow V/P$ establishes a homeomorphism between $\{W\in\Zar(D)\mid W\subseteq V\}$ and $\Zar(V/P|D/P)$, and between $\Zarmin(D)$ and $\Zarmin(V/P|D/P)$.
\end{lemma}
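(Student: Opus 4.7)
The plan is to exhibit $\pi_{*}\colon W\mapsto W/P$ as an order-preserving homeomorphism from $\{W\in\Zar(D):W\subseteq V\}$ onto $\Zar(V/P|D/P)$, and then to reduce the second assertion to the claim that every $W\in\Zarmin(D)$ is in fact contained in $V$.

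First I observe that $P=\mathfrak{m}_{V}\subseteq D\subseteq W$ for every $W\in\Zar(D)$, so $P$ is a prime of any such $W$ with $W\subseteq V$, and $W/P$ is a well-defined subring of the field $V/P$. Injectivity of $\pi_{*}$ is immediate from $W+P=W$. For surjectivity I would check that for each $\overline{W}\in\Zar(V/P|D/P)$ the preimage $\pi^{-1}(\overline{W})$ is a valuation overring of $D$ in $K$ contained in $V$: given $z\in K$, either $z\notin V$ (so $z^{-1}\in P\subseteq\pi^{-1}(\overline{W})$), or $z\in V$ and the valuation dichotomy for $\overline{W}$ applied to $\pi(z)$ (with the case $z\in P$ trivial) places $z$ or $z^{-1}$ in $\pi^{-1}(\overline{W})$. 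Bicontinuity is then routine from the subbasis $\B(\cdot)$: for any lift $x\in V$ of $\bar{x}\in V/P$ one has $\pi_{*}^{-1}(\B(\bar{x}))=\B(x)\cap\{W\subseteq V\}$ (independent of the lift, since $P\subseteq W$), while $\pi_{*}(\B(y)\cap\{W\subseteq V\})$ equals $\B(\bar{y})$ when $y\in V$ and is empty otherwise.

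The main obstacle is the inclusion $\Zarmin(D)\subseteq\{W\in\Zar(D):W\subseteq V\}$. The key observation is that, under the hypothesis $P=\mathfrak{m}_{V}\subseteq D$, the intersection $V\cap W$ is always itself a valuation ring of $K$ (not merely a Pr\"ufer domain). Indeed, for $z\in K$: if $z\notin V$ then $z^{-1}\in\mathfrak{m}_{V}=P\subseteq W$ and $z^{-1}\in V$, so $z^{-1}\in V\cap W$; if $z\in V\setminus W$ then $z\notin P$ (else $z\in P\subseteq W$), so $z$ is a $V$-unit and $z^{-1}\in V$, while $z^{-1}\in W$ by the valuation property of $W$, again giving $z^{-1}\in V\cap W$. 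Hence $V\cap W\in\Zar(D)$, and when $W\not\subseteq V$ this is a proper valuation subring of $W$, contradicting the minimality of $W$.

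Finally, $\pi_{*}$ preserves and reflects inclusion, and the minimal elements of $\{W\in\Zar(D):W\subseteq V\}$ coincide with $\Zarmin(D)$ (because any $W'\subsetneq W$ in $\Zar(D)$ with $W\subseteq V$ automatically satisfies $W'\subseteq V$); hence the restriction of $\pi_{*}$ to minimal elements delivers the second homeomorphism $\Zarmin(D)\cong\Zarmin(V/P|D/P)$.
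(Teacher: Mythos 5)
Your proof is correct, and it follows the same basic route as the paper: the same correspondence $W\mapsto W/P$ with inverse $\overline{W}\mapsto\pi^{-1}(\overline{W})$, and the same computation on the subbasic sets $\B(\cdot)$ for bicontinuity. The differences are in how the valuation-theoretic content is handled. The paper first establishes the homeomorphism $\Over(V|D)\simeq\Over(V/P|D/P)$ and then invokes the pullback criterion of Fontana (a pullback $A$ is a valuation domain if and only if $\pi(A)$ is a valuation domain with quotient field $V/P$) to see which rings correspond to valuation rings; you instead verify the valuation dichotomy for $\pi^{-1}(\overline{W})$ directly, which is more elementary and equally valid. More substantially, the paper disposes of the statement about minimal elements with ``follows immediately'', whereas you supply the justification it leaves implicit: since $\mathfrak{m}_V=P\subseteq D\subseteq W$, the intersection $V\cap W$ is again a valuation ring of $K$, so any $W\in\Zarmin(D)$ must be contained in $V$, and minimality in $\Zar(D)$ coincides with minimality in $\{W\in\Zar(D)\mid W\subseteq V\}$ — this is exactly the point needed to restrict the homeomorphism. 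The one step you leave tacit is the forward check that $W/P$ is itself a valuation ring with quotient field $V/P$ when $W\subseteq V$ (any nonzero $\bar z\in V/P$ lifts to a unit $z$ of $V$, and $z$ or $z^{-1}$ lies in $W$); it is routine and of the same kind as your surjectivity argument, but it should be recorded, since it is what guarantees that the map lands in $\Zar(V/P|D/P)$ at all.
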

\begin{proof}
Consider the set $\Over(V|D)$ and $\Over(V/P|D/P)$. Then, the map
\begin{equation*}
\begin{aligned}
\widetilde{\pi}\colon\Over(V|D) & \longrightarrow\Over(V/P|D/P)\\
A & \longmapsto \pi(A)=A/P
\end{aligned}
\end{equation*}
is a bijection, whose inverse is the map sending $B$ to $\pi^{-1}(B)$. Furthermore, it is a homeomorphism: indeed, if $x\in V/P$ then $\widetilde{\pi}^{-1}(\B(x))=\B(y)$, for any $y\in\pi^{-1}(x)$, while if $x\in V$ then $\widetilde{\pi}(\B(x))=\B(\pi(x))$.

The condition on $P$ implies that $D$ is the pullback of the diagram
\begin{equation*}
\begin{tikzcd}
D\arrow[hookrightarrow]{d}\arrow[two heads]{r}{\pi} & D/P\arrow[hookrightarrow]{d}\\
V\arrow[two heads]{r}{\pi} & V/P;
\end{tikzcd}
\end{equation*}
hence, every $A\in\Over(V|D)$ arises as a pullback. By \cite[Theorem 2.4(1)]{topologically-defined}, $A$ is a valuation domain if and only if $\pi(A)$ is a valuation domain and $V/P$ is the quotient field of $\pi(A)$; hence, $\widetilde{\pi}$ restricts to a bijection between $\Zar(D)\cap\Over(V|D)=\{W\in\Zar(D)\mid W\subseteq V\}$ and $\Zar(V/P|D/P)$. Furthermore, since $\widetilde{\pi}$ is a homeomorphism, so is its restriction. The claim about $\Zar(D)$ and $\Zar(V/P|D/P)$ is proved; the claim for the space of minimal elements follows immediately.
\end{proof}

\begin{prop}\label{prop:riduzione}
Let $D$ be an integral domain, and let $L$ be a field containing $D$. Then, there is a ring $R$ such that:
\begin{itemize}
\item $\Zar(L|D)\simeq\Zar(R)\setminus\{F\}$, where $F$ is the quotient field of $R$;
\item $\Zarmin(L|D)\simeq\Zarmin(R)$.
\end{itemize} 
\end{prop}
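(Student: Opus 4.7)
The plan is to realize $R$ as a $D+M$ pullback construction against a rank-one valuation whose residue field is $L$. Fix an indeterminate $T$ over $L$, set $V:=L[[T]]$ with maximal ideal $M:=TL[[T]]$ and residue field $L$, and take $R:=\pi^{-1}(D)=D+M$, where $\pi\colon V\to L$ is the canonical quotient. Then $M$ is a prime of $R$ (since $R/M=D$) and simultaneously the maximal ideal of the valuation overring $V$ of $R$, so Lemma \ref{lemma:pullback-Zar} provides a homeomorphism
\begin{equation*}
\{W\in\Zar(R)\mid W\subseteq V\}\simeq\Zar(V/M\mid R/M)=\Zar(L\mid D).
\end{equation*}
The quotient field of $R$ is $F:=L((T))$, since for every $f\in L[[T]]$ one has $Tf,T\in R$ with $f=(Tf)/T$, giving $L[[T]]\subseteq\mathrm{Frac}(R)$, while $T^{-1}\in\mathrm{Frac}(R)$ as well.

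The crux is then to prove $\Zar(R)\setminus\{F\}=\{W\in\Zar(R)\mid W\subseteq V\}$, for which I would first establish that every nonzero prime $P$ of $R$ contains $M$. Given $0\neq f\in P$, the product $fT\in P\cap M$ is nonzero, so it can be written as $fT=T^mu$ with $m\geq 1$ and $u$ a unit of $V$; since $Tu^{-1}\in TV=M\subseteq R$, the product $(fT)(Tu^{-1})=T^{m+1}$ lies in $P$, whence $T\in P$. Then for every $g=Th\in M$, the identity $g^2=T\cdot(Th^2)$ displays $g^2$ as an element of $TR\subseteq P$ (because $Th^2\in M\subseteq R$), so $g\in P$ by primality, and hence $M\subseteq P$. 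Consequently, for $W\in\Zar(R)$ the center $Q:=\mathfrak{m}_W\cap R$ is either $(0)$, in which case $R\setminus\{0\}\subseteq W^\times$ forces $W\supseteq\mathrm{Frac}(R)=F$ and $W=F$; or $Q\supseteq M$, in which case any hypothetical $x\in W\setminus V$ would satisfy $x^{-1}\in M\subseteq\mathfrak{m}_W$, contradicting $x\in W^\times$, so $W\subseteq V$.

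Putting these steps together yields the homeomorphism $\Zar(L\mid D)\simeq\Zar(R)\setminus\{F\}$. For the statement on minimal elements, note that $V\in\Zar(R)\setminus\{F\}$ with $V\subsetneq F$, so $F\notin\Zarmin(R)$ and thus $\Zarmin(R)\subseteq\Zar(R)\setminus\{F\}$; because the bijection from Lemma \ref{lemma:pullback-Zar} is induced by the containment-preserving map $A\mapsto A/M$, it restricts to a homeomorphism $\Zarmin(R)\simeq\Zarmin(L\mid D)$. The main technical obstacle is the prime-ideal computation in the pullback, showing that every nonzero prime of $R$ contains $M$; once this is in hand, the rest follows formally from Lemma \ref{lemma:pullback-Zar}.
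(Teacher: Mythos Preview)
Your proof is correct and uses exactly the construction the paper uses, namely $R=D+TL[[T]]$, followed by an application of Lemma~\ref{lemma:pullback-Zar}. The only difference is that where the paper asserts in one line that every valuation overring of $R$ either lies below $L[[T]]$ or properly contains it (hence equals $F$, since $L[[T]]$ is one-dimensional), you supply a more detailed argument via the intermediate claim that every nonzero prime of $R$ contains $M$; this is a valid and slightly more explicit route to the same conclusion.
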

\begin{proof}
Let $X$ be an indeterminate over $L$, and define $R:=D+XL[[X]]$. Then, the prime ideal $P:=XL[[X]]$ of $R$ is also a prime ideal of the valuation domain $L[[X]]$; by Lemma \ref{lemma:pullback-Zar}, it follows that $\Zar(L|D)\simeq\Delta:=\{W\in\Zar(R)\mid W\subseteq L[[X]]\}$. Furthermore, every valuation overring $V$ of $R$ contains $XL[[X]]$, and thus it is either in $\Delta$ or properly contains $L[[X]]$; however, since $L[[X]]$ has dimension 1, the latter case is possible only if $V=L((X))$ is the quotient field of $R$. The first claim is proved, and the second follows easily.
\end{proof}

While Proposition \ref{prop:riduzione} shows that (theoretically) we only need to consider spaces of valuation overrings, it is usually easier to not be restricted to this case; the following Proposition \ref{prop:noncampo} is an example, as will be the analysis of field extensions done in Section \ref{sect:field}.

\begin{prop}\label{prop:noncampo}
Let $D$ be an integral domain that is not a field, let $K$ be its quotient field and $L$ a field extension of $K$. If $\trdeg_KL\geq 1$, then $\Zar(L|D)$ and $\Zarmin(L|D)$ are not Noetherian.
\end{prop}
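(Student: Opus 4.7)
The plan is to construct an infinite family $\{W_f^*\}\subseteq\Zarmin(L|D)$ which is discrete as a subspace: each $W_f^*$ will be isolated from the others by a single element of $L$. Since an infinite discrete space is not compact, this shows that $\Zarmin(L|D)$ is not Noetherian; and because $\Zarmin(L|D)$ is a subspace of $\Zar(L|D)$, the latter fails to be Noetherian as well.

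Using that $D$ is not a field, fix a nonzero maximal ideal $M\subset D$ and any $V_0\in\Zar(K|D)$ dominating $D_M$. Applying Zorn's lemma inside $\{W\in\Zar(K|D):W\subseteq V_0\}$—chains have lower bounds because a descending intersection of valuation rings of a fixed field is again a valuation ring—pick a minimal $W\in\Zarmin(K|D)$ with $W\subseteq V_0$; its center on $D$ is then $M$. By the standard bijection between sub-valuation-rings of $W$ in $\Zar(K|D)$ and nontrivial valuation rings of $k_W$ containing the image $D/M$ of $D$, the minimality of $W$ forces $k_W$ to be algebraic over $D/M$.

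Now fix $t\in L$ transcendental over $K$, which exists by hypothesis. For each monic irreducible $f\in k_W[T]$, standard valuation theory provides a residually algebraic extension $w_f$ of $W$ to $K(t)$ with residue field $k_W[T]/(f)$; this field is again algebraic over $D/M$, so the same bijection shows $w_f$ is minimal in $\Zar(K(t)|D)$. Extend $w_f$ arbitrarily to $\tilde w_f\in\Zar(L|D)$ and, by a further application of Zorn inside $\{V\in\Zar(L|D):V\subseteq\tilde w_f\}$, pick $W_f^*\in\Zarmin(L|D)$ with $W_f^*\subseteq\tilde w_f$. Since $W_f^*\cap K(t)$ is an element of $\Zar(K(t)|D)$ contained in $w_f$, the minimality of $w_f$ yields $W_f^*\cap K(t)=w_f$; in particular the $W_f^*$ are pairwise distinct, and the family is infinite because $k_W[T]$ has infinitely many monic irreducibles.

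For separation, pick a monic lift $F\in O_W[T]$ of each $f$, viewed in $K(t)\subseteq L$. The image of $t$ in $k_{w_f}=k_W[T]/(f)$ is a root of $f$, so $w_f(F(t))>0$; for $g\ne f$ the class of $f$ in $k_W[T]/(g)$ is nonzero, whence $w_g(F(t))=0$. Therefore $1/F(t)\notin W_f^*$ but $1/F(t)\in W_g^*$ for every $g\ne f$, so the open set $U_F:=\Zarmin(L|D)\setminus\B(1/F(t))$ meets the family $\{W_g^*\}_g$ exactly in $\{W_f^*\}$. The collection $\{U_F\}_f$ is thus an open cover of $\{W_f^*\}_f$ with no finite subcover, completing the argument. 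The main technical point is the preservation of minimality along $\Zar(K|D)\to\Zar(K(t)|D)\to\Zar(L|D)$, which at each step is controlled by the same residue-field/sub-valuation-ring correspondence invoked above.
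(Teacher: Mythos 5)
Your construction of the extensions $w_f$ and of the minimal rings $W_f^*$, and the two minimality-preservation steps, are correct; the problem is the last step, and it is fatal to the strategy. In the Zariski topology the sets $\B(y)$ are subbasic \emph{open} sets, not closed ones: the word ``closed'' in the paper's background section is a slip, as one sees from the remark that a family of open sets covers $X$ if and only if it covers $X^\uparrow$ (this forces open sets to be upward closed, which is true for $\B(y)$ and false for its complement), and from the proof of Theorem \ref{teor:global}, where the sets $\B(f_\alpha)$ form an open cover; all the quoted compactness results are for this convention. Consequently $U_F=\Zarmin(L|D)\setminus\B(1/F(t))$ is not open, and what your computation actually shows is that the genuinely open set $\B(1/F(t))$ contains every $W_g^*$ with $g\neq f$ and misses $W_f^*$: the induced topology on $\{W_f^*\}_f$ thus only refines the cofinite topology, and the open cover $\{\B(1/F(t))\}_f$ has a finite subcover (any two of its members already cover the whole family). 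Nothing here contradicts compactness, so non-Noetherianity is not established.

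Moreover the gap cannot be repaired by a cleverer choice of cover, because the family you build is in general compact. Take $D=\mathbb{Z}_{(p)}$ and $L=\mathbb{Q}(t)$, so that $W_f^*=w_f$ is the composite of the Gauss extension of the $p$-adic valuation with the $f$-adic valuation ring $\mathbb{F}_p[\bar t]_{(f)}$ of its residue field. Writing a nonzero $y\in\mathbb{Q}(t)$ as $p^k a/b$ with $a,b\in\mathbb{Z}_{(p)}[t]$ of unit content, one checks that $y$ lies in every $w_f$ if $k>0$, in no $w_f$ if $k<0$, and, if $k=0$, in all $w_f$ except the finitely many $f$ dividing the reduction of $b$ to higher order than that of $a$. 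Hence every subbasic open set meets $\{w_f\}_f$ in either the empty set or a cofinite subset, and Alexander's subbase lemma shows the family is compact; in particular no element of $L$ can isolate a single $w_f$ from the others, so no infinite discrete family of this shape exists. Non-compact witnesses must look different: this is why the paper instead removes one suitable point $W$ and proves that $\Zar(L|D)\setminus\{W\}$ (resp.\ the corresponding subset of $\Zarmin(L|D)$, after adding the compact piece $\{W_P\}^\uparrow$) is non-compact, by passing to $D[X]$ and invoking \cite[Proposition 5.4]{ZarNoeth}.
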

\begin{proof}
If $\trdeg_KL\geq 1$, there is an element $X\in L\setminus K$ that is not algebraic over $L$. If $\Zar(L|D)$ is Noetherian, so is its subset $\Zar(L|D[X])$, and thus also $\Zar(K(X)|D[X])=\Zar(D[X])$, which is the (continuous) image of $\Zar(L|D[X])$ under the intersection map $W\mapsto W\cap K(X)$. However, since $D$ is not a field, $\Zar(D[X])$ is not Noetherian by \cite[Proposition 5.4]{ZarNoeth}; hence, neither $\Zar(L|D)$ can be Noetherian.

Consider now $\Zarmin(L|D)$: it projects onto $\Zarmin(K(X)|D)$, and thus we can suppose that $L=K(X)$. Let $V$ be a minimal valuation overring of $D$: then, there is an extension $W$ of $V$ to $L$ such that $X$ is the generator of the maximal ideal of $W$; furthermore, $W$ belongs to $\Zarmin(K(X)|D)$. In particular, $\Spec(W)\setminus\Max(W)$ has a maximum, say $P$. Let $\Delta:=\Zar(L|D)\setminus\{W\}$: then, $\Delta$ can be written as the union of $\Lambda:=(\Zarmin(L|D)\setminus\{W\})^\uparrow$ and $\{W_P\}^\uparrow$. The latter is compact since $\{W_P\}$ is compact; if $\Zarmin(L|D)\setminus\{W\}$ were compact, so would be $\Lambda$. In this case, also $\Delta$ would be compact, against the proof of \cite[Proposition 5.4]{ZarNoeth}. Hence, $\Delta$ is not compact, and so $\Zarmin(L|D)$ is not Noetherian.
\end{proof}

\section{Field extensions}\label{sect:field}
In this section, we consider a field extension $F\subseteq L$ and analyze when the Zariski space $\Zar(L|F)$ and its subset $\Zarmin(L|F)$ are Noetherian. By Proposition \ref{prop:riduzione}, this is equivalent to studying the Zariski space of the pseudo-valuation domain $F+XL[[X]]$.

This problem naturally splits into three cases, according to whether the transcendence degree of $L$ over $F$ is $0$, $1$ or at least $2$. The first and the last cases have definite answers, and we collect them in the following proposition. Part \ref{prop:trdeg02:2} is a slight generalization of \cite[Corollary 5.5(b)]{ZarNoeth}.
\begin{prop}\label{prop:trdeg02}
Let $F\subseteq L$ be a field extension.
\begin{enumerate}[(a)]
\item\label{prop:trdeg02:0} If $\trdeg_FL=0$, then $\Zar(L|F)=\{L\}=\Zarmin(L|D)$, and in particular they are Noetherian.
\item\label{prop:trdeg02:2} If $\trdeg_FL\geq 2$, then $\Zar(L|F)$ and $\Zarmin(L|F)$ are not Noetherian.
\end{enumerate}
\end{prop}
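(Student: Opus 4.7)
Part \ref{prop:trdeg02:0} is immediate: if $L/F$ is algebraic, every $\alpha\in L$ is integral over $F$, hence over any $V\in\Zar(L|F)$, and since $V$ is integrally closed in its quotient field $L$ we get $V=L$. So $\Zar(L|F)=\Zarmin(L|F)=\{L\}$.

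For part \ref{prop:trdeg02:2}, fix $t\in L$ transcendental over $F$. The $\Zar$ statement is the easy half: the subspace $\{V\in\Zar(L|F):t\in V\}$ coincides with $\Zar(L|F[t])$ and is closed in $\Zar(L|F)$ (it equals $\B(t)\cap\Zar(L|F)$). Since $F[t]$ is not a field and $\trdeg_{F(t)}L\geq 1$, Proposition~\ref{prop:noncampo} yields that $\Zar(L|F[t])$ is not Noetherian, and a non-Noetherian closed subspace prevents $\Zar(L|F)$ from being Noetherian. The $\Zarmin$ statement is more delicate: a valuation minimal in $\Zar(L|F[t])$ need not be minimal in $\Zar(L|F)$, because minimality in $\Zar(L|F)$ is equivalent to having residue field algebraic over $F$, which typically fails when the center on $F[t]$ is zero. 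The plan is therefore to exhibit a non-compact family $\{V_p\}\subseteq\Zarmin(L|F)$ directly.

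Using $\trdeg_F L\geq 2$, fix $s\in L$ transcendental over $F(t)$. For each monic irreducible $p\in F[t]$, consider the rank-$2$ valuation $w_p$ on $F(s,t)$ with lex-ordered value group $\mathbb{Z}\times\mathbb{Z}$ prescribed by $w_p(s)=(1,0)$ and $w_p|_{F(t)^\times}=(0,v_p(\cdot))$, where $v_p$ denotes the $p$-adic valuation on $F(t)$; the associated valuation ring contains $F[t]$, has residue field $F[t]/(p)$ (algebraic over $F$), and its maximal ideal meets $F[t]$ in $(p)$. Standard valuation-extension theory, inserting additional higher levels in the value group to absorb any transcendentals in $L/F(s,t)$, then produces an extension $V_p\in\Zar(L|F)$ of $w_p$ whose residue field is still algebraic over $F$; such a $V_p$ automatically lies in $\Zarmin(L|F)$, inheriting $F[t]\subseteq V_p$ and $\mathfrak{m}_{V_p}\cap F[t]=(p)$.

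With the $V_p$ in hand, for distinct monic irreducibles $p\neq q$ in $F[t]$ one has $q\notin\mathfrak{m}_{V_p}$, so $1/q\in V_p$, whereas $1/p\notin V_p$. Setting $U_q:=\Zar(L|F)\setminus\B(1/q)$, the open sets $\{U_q\}_q$ cover $\{V_p\}_p$ with $V_p\in U_q\iff p=q$; since $F[t]$ has infinitely many monic irreducibles, no finite subcollection covers $\{V_p\}_p$, so $\{V_p\}_p$ is not compact and $\Zarmin(L|F)$ is not Noetherian. The main technical obstacle is the extension step: guaranteeing that the lift $V_p$ still has residue algebraic over $F$ when $L/F(s,t)$ has positive transcendence degree requires Abhyankar-type bookkeeping when choosing the additional valuation levels.
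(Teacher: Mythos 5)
Part \ref{prop:trdeg02:0} and the statement about $\Zar(L|F)$ in part \ref{prop:trdeg02:2} are fine: passing to the subspace $\Zar(L|F[t])$ and invoking Proposition \ref{prop:noncampo} is legitimate, and you correctly note that this does not settle $\Zarmin(L|F)$, since minimality over $F[t]$ and over $F$ differ. The genuine problem is the final compactness argument for your family $\{V_p\}$. The sets $\B(x_1,\ldots,x_n)$ form a basis of \emph{open} sets for the Zariski topology used throughout the paper (the word ``closed'' in the Background section is a slip: the proof of Theorem \ref{teor:global} takes open covers of the form $\{\B(f_\alpha)\}$, and the equivalence ``$X$ compact iff $X^\uparrow$ compact'' requires open sets to be upward closed, which holds for $\B(x)$ but not for its complement). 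Hence your sets $U_q=\Zar(L|F)\setminus\B(1/q)$ are closed, not open, and $\{U_q\}_q$ is not an open cover; what your argument shows is only a failure of compactness in the inverse (patch-type) topology, which says nothing about Noetherianity of the Zariski topology.

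Worse, the family you construct is in fact compact, so this witness cannot be repaired. Take already $L=F(s,t)$, where $V_p=W_p$ is the composite of the $s$-adic valuation ring $O_s$ of $F(s,t)$ (with residue field $F(t)$) and $F[t]_{(p)}$. If $\B(y_1,\ldots,y_m)$ is any basic open set containing some $W_{p_0}$, then each $y_i$ lies in $O_s$ and its residue $\bar y_i\in F(t)$ lies in $F[t]_{(p)}$ for all but the finitely many $p$ dividing a denominator; hence this single basic open set already contains $W_p$ for all but finitely many $p$, and every open cover of $\{W_p\}_p$ admits a finite subcover. This phenomenon (one neighbourhood swallowing almost the whole family) is precisely why the paper argues differently: it reduces to $F(X,Y)$ via the surjectivity of the intersection map on minimal elements, removes a \emph{single} minimal valuation ring $V$, and shows that $\Zarmin(F(X,Y)|F)\setminus\{V\}$ cannot be compact by comparing $\Zar(F(X,Y)|F)\setminus\{V\}$ with $\Zar(F[X,Y])\setminus\{V\}$, which is non-compact by the proof of \cite[Proposition 5.4]{ZarNoeth}, using a closedness-in-the-inverse-topology argument. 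By contrast, the lifting step you flag as the main obstacle (extending $w_p$ to $L$ with residue field algebraic over $F$, hence minimal) is standard and is not where the proof breaks.
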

\begin{proof}
\ref{prop:trdeg02:0} is obvious. For \ref{prop:trdeg02:2}, let $X,Y$ be elements of $L$ that are algebraically independent. Then, the intersection map $\Zarmin(L|F)\longrightarrow\Zarmin(F(X,Y)|F)$ is surjective, and thus it is enough to prove that $\Zarmin(F(X,Y)|F)$ is not Noetherian.

Let $V\in\Zarmin(F(X,Y)|F)$ and, without loss of generality, suppose $X,Y\in V$. Let $\Delta:=\Zarmin(F(X,Y)|F)\setminus\{V\}$. Then, $\Lambda:=\Zar(F(X,Y)|F)\setminus\{V\}$ is the union of $\Delta^\uparrow$ and a finite set (the valuation domains properly containing $V$). If $\Delta$ were compact, so would be $\Lambda$; hence, so would be $\Lambda\cap\Zar(F[X,Y])$ (since both $\Lambda$ and $\Zar(F[X,Y])$ would be closed in the inverse topology; see e.g. \cite[Remark 2.2 and Proposition 2.6]{fifolo_transactions}). However, $\Lambda\cap\Zar(F[X,Y])=\Zar(F[X,Y])\setminus\{V\}$, which is not compact by the proof of \cite[Proposition 5.4]{ZarNoeth}. Hence, $\Lambda$ is not compact, and thus $\Delta$ cannot be compact. Hence, $\Zarmin(F(X,Y)|F)$ is not Noetherian.
\end{proof}

On the other hand, the case of transcendence degree 1 is more subtle.  In \cite[Corollary 5.5(a)]{ZarNoeth}, it was showed that $\Zar(L|F)$ is Noetherian if $L$ is finitely generated over $F$; we now state a characterization.
\begin{prop}\label{prop:trdeg1}
Let $F\subseteq L$ be a field extension such that $\trdeg_FL=1$. Then, the following are equivalent:
\begin{enumerate}[(i)]
\item\label{prop:trdeg1:LF} $\Zar(L|F)$ is Noetherian;
\item\label{prop:trdeg1:LFmin} $\Zarmin(L|F)$ is Noetherian;
\item\label{prop:trdeg1:every-fm} for every $X\in L$ transcendental over $F$, every valuation on $F[X]$ has only finitely many extensions to $L$;
\item\label{prop:trdeg1:ext-fm} there is an $X\in L$, transcendental over $F$, such that every valuation on $F[X]$ has only finitely many extensions to $L$;
\item\label{prop:trdeg1:every-ic} for every $X\in L$ transcendental over $F$, the integral closure of $F[X]$ in $L$ has Noetherian spectrum;
\item\label{prop:trdeg1:ext-ic} there is an $X\in L$, transcendental over $F$, such that the integral closure of $F[X]$ in $L$ has Noetherian spectrum.
\end{enumerate}
\end{prop}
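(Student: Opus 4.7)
My plan has three stages. First, the trivial implications together with the topological reduction $(\ref{prop:trdeg1:LF}) \Leftrightarrow (\ref{prop:trdeg1:LFmin})$: the implications $(\ref{prop:trdeg1:every-fm}) \Rightarrow (\ref{prop:trdeg1:ext-fm})$, $(\ref{prop:trdeg1:every-ic}) \Rightarrow (\ref{prop:trdeg1:ext-ic})$, and $(\ref{prop:trdeg1:LF}) \Rightarrow (\ref{prop:trdeg1:LFmin})$ are immediate, and for the last converse, by Abhyankar's inequality every element of $\Zar(L|F)$ has rank at most $1$, so the only non-minimal element is the trivial valuation $L$ itself; since $L$ lies in every subbasic closed set $\B(x_1,\ldots,x_n)$, it lies in every nonempty closed set, making $L$ a ``generic point'' whose only open neighborhood is the whole space, which implies that adjoining it to a Noetherian $\Zarmin(L|F)$ cannot break Noetherianness.

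Second, I identify $(\ref{prop:trdeg1:every-fm})$--$(\ref{prop:trdeg1:ext-ic})$ with Noetherianness of certain Pr\"ufer spectra. For each transcendental $X\in L$, let $B_X$ denote the integral closure of $F[X]$ in $L$; since $F[X]$ is Dedekind and $L/F(X)$ is algebraic, $B_X$ is a one-dimensional Pr\"ufer domain of quotient field $L$, and the center map gives homeomorphisms $\Zar(L|F[X])=\Zar(L|B_X)\simeq\Spec(B_X)$ and $\Zarmin(L|F[X])\simeq\Max(B_X)$. Every nonzero $b\in B_X$ satisfies an integral equation over $F[X]$ whose nonzero constant term lies in the ideal $(b)$, hence in every maximal ideal containing $b$; so $\Spec(B_X)$ is Noetherian iff every maximal ideal of $F[X]$ has finitely many primes of $B_X$ above it, iff every valuation on $F[X]$ has finitely many extensions to $L$. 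This yields $(\ref{prop:trdeg1:every-fm})\Leftrightarrow(\ref{prop:trdeg1:every-ic})$ and $(\ref{prop:trdeg1:ext-fm})\Leftrightarrow(\ref{prop:trdeg1:ext-ic})$. Then $(\ref{prop:trdeg1:every-ic})\Rightarrow(\ref{prop:trdeg1:LF})$ follows from the cover $\Zar(L|F)=\Zar(L|F[X])\cup\Zar(L|F[1/X])$ (every valuation contains $X$ or $1/X$) applied to both $X$ and $1/X$; conversely $(\ref{prop:trdeg1:LF})\Rightarrow(\ref{prop:trdeg1:every-ic})$ and $(\ref{prop:trdeg1:LFmin})\Rightarrow(\ref{prop:trdeg1:ext-ic})$ because $\Zar(L|F[X])\simeq\Spec(B_X)$ and $\Zarmin(L|F[X])\simeq\Max(B_X)$ are closed subspaces.

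Finally, the main obstacle is $(\ref{prop:trdeg1:ext-ic})\Rightarrow(\ref{prop:trdeg1:every-ic})$: given $\Spec(B_{X_0})$ Noetherian for some transcendental $X_0$, deduce $\Spec(B_Y)$ Noetherian for every transcendental $Y\in L$. The critical case is $Y=1/X_0$, since from $\Spec(B_{X_0})$ and $\Spec(B_{1/X_0})$ both Noetherian one recovers $(\ref{prop:trdeg1:LF})$ by the cover argument above, and then $(\ref{prop:trdeg1:every-ic})$ for every $Y$ follows by the closed subspace argument. The approach is to use the common localization $C = B_{X_0}[1/X_0] = B_{1/X_0}[X_0]$, which is the integral closure of $F[X_0, 1/X_0]$ in $L$: from $\Spec(B_{X_0})$ Noetherian, $\Spec(C)$ is Noetherian as an open subspace, and $\Spec(B_{1/X_0})$ decomposes as $\Spec(C)$ together with the closed set of those maximal ideals of $B_{1/X_0}$ containing $1/X_0$. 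This latter set is in bijection with the extensions to $L$ of the infinity valuation of $F(X_0)$; showing it is finite---equivalently, that Noetherianness of $\Spec(B_{X_0})$ already controls the behavior at infinity of $X_0$---is where the real work lies, and I expect it to require a finer analysis tying the maximal-ideal structure of $B_{X_0}$ to that at infinity, perhaps via the Kronecker function ring machinery of the background section or via the reduction to the pseudo-valuation domain of Proposition \ref{prop:riduzione}.
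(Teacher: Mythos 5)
Everything you actually carry out is correct, and it largely parallels the paper's proof with different bookkeeping: the paper also disposes of \ref{prop:trdeg1:LF}$\Leftrightarrow$\ref{prop:trdeg1:LFmin} by noting $\Zar(L|F)=\Zarmin(L|F)\cup\{L\}$, also proves \ref{prop:trdeg1:every-fm}$\Leftrightarrow$\ref{prop:trdeg1:every-ic} by pushing elements down to $F[X]$ (via the norm, where your constant-term-of-an-integral-equation argument is a clean substitute), and also concludes with the cover $\Zar(L|F)=\Zar(L|F[X])\cup\Zar(L|F[X^{-1}])$, using the homeomorphisms with $\Spec$ of the Pr\"ufer integral closures exactly as you do. Your route \ref{prop:trdeg1:LF}$\Rightarrow$\ref{prop:trdeg1:every-ic}$\Rightarrow$\ref{prop:trdeg1:every-fm} through $\Spec(B_X)$ is a legitimate alternative to the paper's Jacobson-radical argument for \ref{prop:trdeg1:LF}$\Rightarrow$\ref{prop:trdeg1:every-fm}.

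However, the proposal does not prove the proposition: the implication from the existential conditions \ref{prop:trdeg1:ext-fm}/\ref{prop:trdeg1:ext-ic} back to the universal ones (equivalently to \ref{prop:trdeg1:LF}) is exactly the step you leave open, and it is the only implication that is not formal bookkeeping. Noetherianness of $\Spec(B_{X_0})$ constrains, a priori, only the valuations nonnegative on $F[X_0]$; your reformulation via $C=B_{X_0}[1/X_0]$ just renames the problem, since the finiteness of the fibre of $\Spec(B_{1/X_0})$ over the place at infinity of $F(X_0)$ is precisely what must be shown, and neither the Kronecker machinery nor Proposition \ref{prop:riduzione} produces it by itself. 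The paper closes this loop with a short symmetry argument rather than ``real work'': the $F$-isomorphism $\phi$ of $F(X_0)$ sending $X_0\mapsto X_0^{-1}$ is extended along the algebraic extension $L/F(X_0)$ to an $F$-isomorphism of $L$, which carries $B_{X_0}$ onto the integral closure $B_{1/X_0}$ of $F[X_0^{-1}]$ in $L$; hence $\Spec(B_{1/X_0})\simeq\Spec(B_{X_0})$ is Noetherian, and your own cover argument then yields \ref{prop:trdeg1:LF}. Without this transfer from $X_0$ to $X_0^{-1}$ (or some other proof that the infinity valuation has only finitely many extensions), your chain of implications does not close, so as it stands the proposal establishes the equivalence of \ref{prop:trdeg1:LF}, \ref{prop:trdeg1:LFmin}, \ref{prop:trdeg1:every-fm}, \ref{prop:trdeg1:every-ic} but not their equivalence with \ref{prop:trdeg1:ext-fm} and \ref{prop:trdeg1:ext-ic}.
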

\begin{proof}
Every valuation domain of $L$ containing $F$ must contain the algebraic closure of $F$ is $L$; hence, without loss of generality we can suppose that $F$ is algebraically closed in $L$.

\ref{prop:trdeg1:LF} $\Longrightarrow$ \ref{prop:trdeg1:LFmin} is obvious; \ref{prop:trdeg1:LFmin} $\Longrightarrow$ \ref{prop:trdeg1:LF} follows since (being $\trdeg_FL=1$) $\Zar(L|F)=\Zarmin(L|F)\cup\{L\}$.

\ref{prop:trdeg1:LF} $\Longrightarrow$ \ref{prop:trdeg1:every-fm}. Take $X\in L\setminus F$, and suppose there is a valuation $w$ on $F[X]$ with infinitely many extensions to $L$; let $W$ be the valuation domain corresponding to $w$. Then, the integral closure $\overline{W}$ of $W$ in $L$ would have infinitely many maximal ideals. Since every maximal ideal of $\overline{W}$ contains the maximal ideal of $W$, the Jacobson radical $J$ of $\overline{W}$ contains the maximal ideal of $W$, and in particular it is nonzero. It follows that $J$ has infinitely many minimal primes; hence, $\Max(\overline{W})$ is not a Noetherian space. However, $\Max(\overline{W})$ is homeomorphic to a subspace of $\Zar(L|F)$, which is Noetherian by hypothesis; this is a contradiction, and so every valuation has only finitely many extensions.

\ref{prop:trdeg1:every-fm} $\Longrightarrow$ \ref{prop:trdeg1:every-ic}. Let $T$ be the integral closure of $F[X]$. If $\Spec(T)$ is not Noetherian, then $T$ is not locally finite; i.e., there is an $\alpha\in T$ such that there are infinitely many maximal ideals of $T$ containing $\alpha$. Consider the norm $N(\alpha)$ of $\alpha$ over $F[X]$, i.e., the product of the algebraic conjugates of $\alpha$ over $F[X]$. Then, $N(\alpha)\neq 0$, and it is both an element of $F[X]$ (being equal to the constant term of the minimal polynomial of $F[X]$ over $\alpha$) and an element of every maximal ideal containing $\alpha$ (since all the conjugates are in $T$). Since every maximal ideal of $F[X]$ is contained in only finitely many maximal ideals of $T$ (since a maximal ideal of $F[X]$ correspond to a valuation $v$ and the maximal ideals of $T$ containing it to the extensions of $v$), it follows that $N(\alpha)$ is contained in infinitely many maximal ideals of $F[X]$. However, this contradicts the Noetherianity of $\Spec(F[X])$; hence, $\Spec(T)$ is Noetherian.

Now  \ref{prop:trdeg1:every-fm} $\Longrightarrow$ \ref{prop:trdeg1:ext-fm} and \ref{prop:trdeg1:every-ic} $\Longrightarrow$ \ref{prop:trdeg1:ext-ic} are obvious, while the proof of \ref{prop:trdeg1:ext-fm} $\Longrightarrow$ \ref{prop:trdeg1:ext-ic} is exactly the same as the previous paragraph; hence, we need only to show \ref{prop:trdeg1:ext-ic} $\Longrightarrow$ \ref{prop:trdeg1:LF}; the proof is similar to the one of \cite[Corollary 5.5(a)]{ZarNoeth}.

Let $X\in L$, $X$ transcendental over $F$, be such that the spectrum of the integral closure $T$ of $F[X]$ is Noetherian. Since $X$ is transcendental over $F$, there is an $F$-isomorphism $\phi$ of $F(X)$ sending $X$ to $X^{-1}$; moreover, we can extend $\phi$ to an $F$-isomorphism $\overline{\phi}$ of $L$. Since $\phi(F[X])=F[X^{-1}]$, the integral closure $T$ of $F[X]$ is sent by $\overline{\phi}$ to the integral closure $T'$ of $F[X^{-1}]$; in particular, $T\simeq T'$, and $\Spec(T)\simeq\Spec(T')$. Thus, also $\Spec(T')$ is Noetherian, and so is $\Spec(T)\cup\Spec(T')$. Furthermore, $\Zar(T)\simeq\Spec(T)\simeq\Spec(L|F[X])$, and analogously for $T'$; hence, $\Zar(T)\cup\Zar(T')$ is Noetherian. But every $W\in\Zar(L|F)$ contains at least one between $X$ and $X^{-1}$, and thus $W$ contains $F[X]$ or $F[X^{-1}]$; i.e., $W\in\Zar(T)$ or $W\in\Zar(T')$. Hence, $\Zar(L|F)=\Zar(T)\cup\Zar(T')$ is Noetherian.
\end{proof}

We remark that there are field extensions that satisfy the conditions of Proposition \ref{prop:trdeg1} without being finitely generated. For example, if $L$ is purely inseparable over some $F(X)$, then every valuation on $F[X]$ extends uniquely to $L$, and thus condition \ref{prop:trdeg1:every-fm} of the previous proposition is fulfilled; more generally, each valuation on $F(X)$ extends in only finitely many ways when the separable degree $[L:F(X)]_s$ is finite \cite[Corollary 20.3]{gilmer}. There are also examples in characteristic 0: for example, \cite[Section 12.2]{ribenboim} gives examples of non-finitely generated algebraic extension $F$ of the rational numbers such that every valuation on $\insQ$ has only finitely many extensions to $F$. The same construction works also on $\insQ(X)$, and if $L$ is such an example then $\insQ\subseteq L$ will satisfy the conditions of Proposition \ref{prop:trdeg1}.

\section{The domain case}\label{sect:domain}
We now want to study when the space $\Zar(D)$ is Noetherian, where $D$ is an integral domain; without loss of generality, we can suppose that $D$ is integrally closed, since $\Zar(D)=\Zar(\overline{D})$. We start by studying intersections of Noetherian families of valuation rings.

Recall that a \emph{treed domain} is an integral domain whose spectrum is a tree (i.e., such that, if $P$ and $Q$ are non-comparable prime ideals, then they are coprime). In particular, every Pr\"ufer domain is treed.
\begin{lemma}\label{lemma:treed-MaxR}
Let $R$ be a treed domain. If $\Max(R)$ is Noetherian, then every ideal of $R$ has only finitely many minimal primes.
\end{lemma}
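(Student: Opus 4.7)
The plan is to combine the standard decomposition of a Noetherian topological space into finitely many irreducible components with the observation that, in a treed ring, each irreducible closed subset of $\Max(R)$ sees at most one minimal prime of $I$.

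First I would establish the disjoint decomposition
\[
V(I)\cap\Max(R) \;=\; \bigsqcup_{P}\bigl(V(P)\cap\Max(R)\bigr),
\]
where $P$ ranges over the minimal primes of $I$. Existence of some $P\subseteq M$ for every $M\in V(I)\cap\Max(R)$ follows from Zorn's lemma applied to primes in $[I,M]$; uniqueness is where the treed hypothesis enters, since two distinct minimal primes $P_1,P_2$ of $I$ both contained in $M$ would be incomparable, hence coprime, yet $P_1+P_2\subseteq M\subsetneq R$, a contradiction. Each piece $V(P)\cap\Max(R)$ is nonempty and closed in $\Max(R)$.

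Since $V(I)\cap\Max(R)$ is a closed subspace of the Noetherian space $\Max(R)$, it is itself Noetherian and hence has only finitely many irreducible components $Z_1,\ldots,Z_m$. The heart of the proof is to show that each $Z_j$ is contained in one of the pieces appearing above. To do this I would set $P_j:=\bigcap_{M\in Z_j}M$ and verify: (a) $Z_j=V(P_j)\cap\Max(R)$, since any closed subset of $\Max(R)$ coincides with the set of maximal ideals containing its own intersection; (b) $P_j$ is prime, because the basic open $D(a)\cap Z_j$ is nonempty exactly when $a\notin P_j$ and $(D(a)\cap Z_j)\cap(D(b)\cap Z_j)=D(ab)\cap Z_j$, so the topological irreducibility of $Z_j$ translates literally into primality of $P_j$; (c) $P_j\supseteq I$, because every $M\in Z_j$ contains $I$; and (d) by the same treed-plus-Zorn argument from the first step, a unique minimal prime $P^{(j)}$ of $I$ satisfies $P^{(j)}\subseteq P_j$. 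Hence $Z_j\subseteq V(P^{(j)})\cap\Max(R)$.

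Combining the disjoint decomposition with the inclusion $V(I)\cap\Max(R)=\bigcup_{j=1}^{m}Z_j\subseteq\bigcup_{j=1}^{m}V(P^{(j)})\cap\Max(R)$ forces the set of minimal primes of $I$ to be contained in $\{P^{(1)},\ldots,P^{(m)}\}$, and hence to be finite. The main obstacle is step (b): translating the topological irreducibility of $Z_j\subseteq\Max(R)$ into the algebraic statement that $P_j$ is a prime ideal of $R$. This reduction is short once one recalls that the basic opens of $\Max(R)$ are precisely the sets $D(a)\cap\Max(R)$ for $a\in R$, but it is the only place where the passage between the spectrum and the max-spectrum really enters.
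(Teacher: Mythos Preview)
Your argument is correct, and it follows a genuinely different route from the paper's proof. The paper picks, for each minimal prime $P_\alpha$ of $I$, a maximal ideal $M_\alpha\supseteq P_\alpha$ (distinct by treedness), sets $\Lambda=\{M_\alpha\}$, and then shows that \emph{every} subset $X\subseteq\Lambda$ is closed in $\Lambda$: it defines $J(X)=\bigl(\bigcap_{M\in X}IR_M\bigr)\cap R$ and, using a compactness result on intersections of localizations from an external reference, proves that $J(X)\subseteq M$ for $M\in\Lambda$ if and only if $M\in X$. A Noetherian space in which every subset is closed is finite, so $\Lambda$ is finite and hence so is the set of minimal primes. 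Your approach instead decomposes $V(I)\cap\Max(R)$ into its finitely many irreducible components and shows, via the primality of $P_j=\bigcap_{M\in Z_j}M$ and the treed hypothesis, that each component meets only one piece $V(P)\cap\Max(R)$ of the disjoint decomposition. Your proof is more self-contained---it avoids the external compact-intersections lemma and uses only the standard structure theory of Noetherian spaces---while the paper's argument is shorter once that lemma is granted and yields the slightly sharper intermediate statement that $\Lambda$ carries the discrete topology.
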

Note that we cannot improve this result to $\Spec(R)$ being Noetherian: for example, the spectrum of a valuation domain with unbranched maximal ideal if not Noetherian, while its maximal spectrum -- a singleton -- is Noetherian.
\begin{proof}
Let $I$ be an ideal of $R$, and let $\{P_\alpha\mid\alpha\in A\}$ be the set of its minimal prime ideals. For every $\alpha$, choose a maximal ideal $M_\alpha$ containing $P_\alpha$; note that $M_\alpha\neq M_\beta$ if $\alpha\neq\beta$, since $R$ is treed. Let $\Lambda$ be the set of the $M_\alpha$.

Let $X\subseteq\Lambda$, and define $J(X):=\bigcap\{IR_M\mid M\in X\}\cap R$: we claim that, if $M\in\Lambda$, then $J(X)\subseteq M$ if and only if $M\in X$.  Indeed, clearly $J(X)$ is contained in every element of $X$. On the other hand, suppose $N\in\Lambda\setminus X$. Since $\Max(R)$ is Noetherian, $X$ is compact, and thus also $\{R_M\mid M\in X\}$ is compact; by \cite[Corollary 5]{compact-intersections},
\begin{equation*}
J(X)R_N=\left(\bigcap_{M\in X}IR_M\right)R_N\cap R_N=\bigcap_{M\in X}IR_MR_N\cap R_N
\end{equation*}
Since $M,N\in\Lambda$, no prime contained in both $M$ and $N$ contains $I$; hence, $IR_MR_N$ contains 1 for each $M\in X$. Therefore, $1\in J(X)R_N$, i.e., $J(X)\nsubseteq N$.

Hence, every subset $X$ of $\Lambda$ is closed in $\Lambda$, since it is equal to the intersection between $\Lambda$ and the closed set of $\Spec(R)$ determined by $J(X)$. Since $\Lambda$ is Noetherian, it follows that $\Lambda$ must be finite; hence, also the set of minimal primes of $I$ is finite. The claim is proved.
\end{proof}

\begin{lemma}\label{lemma:VbWb}
Let $D$ be an integral domain with quotient field $K$, and let $V,W\in\Zar(D)$. If $VW=K$, then $V^bW^b=K(X)$.
\end{lemma}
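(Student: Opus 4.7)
The plan is to argue the contrapositive: I will show that if $V^bW^b \neq K(X)$, then $VW \neq K$. The strategy is to find a proper valuation overring $U$ of $V^bW^b$ in $K(X)$ and contract it down to $K$, using an explicit description of the compositum $V^b \cdot K$ to guarantee that the contraction is itself a proper valuation overring of $VW$.

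First, assuming $V^bW^b \subsetneq K(X)$, I would invoke the standard fact that any proper subring of a field is contained in some proper valuation subring of that field, obtaining a valuation domain $U$ of $K(X)$ with $V^bW^b \subseteq U \subsetneq K(X)$. Then $U \cap K$ is either $K$ itself or a valuation domain of $K$, and in either case it contains $V = V^b \cap K$ and $W = W^b \cap K$, hence the compositum $VW$.

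The crucial step is ruling out the possibility $U \cap K = K$. If $K \subseteq U$, then $V^b \cdot K \subseteq U$, so it suffices to prove $V^b \cdot K = K(X)$. For this I will use that every finitely generated ideal of the valuation domain $V$ is principal: for any nonzero $f \in K[X]$, the $V$-fractional ideal generated by the coefficients of $f$ has the form $dV$ for some $d \in K^\times$, so $h := f/d$ lies in $V[X]$ with content equal to $V$ and is therefore a unit of $V^b = V[X]_{\mathfrak{m}_V[X]}$. Hence $f = dh$ is a unit in $V^b \cdot K$, which shows that every nonzero element of $K[X]$ is invertible in $V^b \cdot K$, so $V^b \cdot K = K(X)$, contradicting $U \neq K(X)$.

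Combining these steps, $U \cap K$ is a proper valuation overring of $VW$, hence $VW \neq K$, completing the contrapositive. The main obstacle is the content-based identification of $V^b \cdot K$ with $K(X)$; the remaining manipulations with contractions and compositums are routine consequences of the fact that the intersection of a valuation ring with a subfield is a valuation ring of that subfield.
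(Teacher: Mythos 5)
Your argument is correct in substance and takes a genuinely different route from the paper. The paper works entirely inside the Kronecker function ring: setting $Z:=V^bW^b$, it uses the homeomorphism between $\Zar(D)$ and $\Zar(\Kr(D))$ (under $T\mapsto T\cap K$) to conclude $Z=(Z\cap K)^b$, and since $K=VW\subseteq Z\cap K$ this forces $Z=K^b=K(X)$. You instead prove the contrapositive by contracting a valuation ring containing $V^bW^b$ down to $K$, and the real content of your proof is the identity $V^b\cdot K=K(X)$, which you establish by the content argument: every nonzero $f\in K[X]$ factors as $dh$ with $d\in K^\times$ and $h\in V[X]\setminus\mathfrak{m}_V[X]$, so $h$ is a unit of $V^b=V[X]_{\mathfrak{m}_V[X]}$. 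That computation is correct and gives a self-contained, elementary alternative to invoking the $\Zar(D)\simeq\Zar(\Kr(D))$ machinery, which is what the paper's shorter proof leans on.

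One justification you give is false as stated, though harmlessly so here: it is not true that every proper subring of a field is contained in a proper valuation subring of that field. For instance $\mathbb{F}_p\subsetneq\overline{\mathbb{F}_p}$ (or any proper subring over which the field is integral) is contained in no proper valuation ring of the field, since the intersection of all valuation rings of the field containing a subring is the integral closure of that subring. In your situation no such general fact is needed: $V^bW^b$ contains the valuation domain $V^b$, whose quotient field is $K(X)$, and every ring between a valuation domain and its quotient field is a localization of it at a prime, hence again a valuation ring; so if $V^bW^b\subsetneq K(X)$ you may simply take $U:=V^bW^b$ itself. With that replacement the rest of your argument --- $U\cap K$ is either $K$ or a valuation ring of $K$, it contains $V$ and $W$ and hence $VW$, and $U\cap K=K$ is impossible because it would give $K(X)=V^b\cdot K\subseteq U$ --- goes through verbatim.
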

\begin{proof}
Let $Z:=V^bW^b$. Then, Since $\Zar(D)$ and $\Zar(\Kr(D))$ are homeomorphic, $Z=(Z\cap K)^b$; however, $K\subseteq VW\subseteq V^bW^b$, and thus $Z\cap K=K$. It follows that $Z=K^b=K(X)$, as claimed.
\end{proof}

A consequence of Lemma \ref{lemma:treed-MaxR} is the following generalization of \cite[Theorem 3.4(2)]{olberding_noetherianspaces}.
\begin{teor}\label{teor:noeth-locfin}
Let $\Delta\subseteq\Zar(D)$ be a Noetherian space, and suppose that $VW=K$ for every $V\neq W$ in $\Delta$. Then, $\Delta$ is a locally finite space.
\end{teor}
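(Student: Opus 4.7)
The plan is to realize $\Delta$ as the maximal spectrum of the Kronecker function ring $R := \Kr(D,\Delta)$ and then invoke Lemma \ref{lemma:treed-MaxR}. Because $R$ is Bézout, it is Prüfer and in particular treed; and because $\Delta$ is Noetherian (hence compact), the theory quoted in the background furnishes an order-reversing homeomorphism $\Spec(R)\simeq\Delta^\uparrow$ sending $\mathfrak{p}$ to $R_\mathfrak{p}\cap K$, with $V^b=R_\mathfrak{m}$ whenever the maximal ideal $\mathfrak{m}$ corresponds to $V$.

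The first move is to identify $\Max(R)$ with $\Delta$. The hypothesis forces distinct elements of $\Delta$ to be incomparable, since $V\subsetneq W$ would give $VW=W$, forcing $W=K$, a degenerate case that contributes nothing to local finiteness and can be set aside. Hence $\Delta$ is an antichain: every element of $\Delta$ is minimal in $\Delta^\uparrow$, and conversely every element minimal in $\Delta^\uparrow$ must itself lie in $\Delta$. Through the order-reversing homeomorphism this gives $\Max(R)\simeq\Delta$, a Noetherian space, and Lemma \ref{lemma:treed-MaxR} then ensures that every ideal of $R$ has only finitely many minimal primes.

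It remains to deduce local finiteness. For nonzero $x\in D$, the correspondence $V=R_\mathfrak{m}\cap K$ gives $\mathfrak{m}_V=\mathfrak{m} R_\mathfrak{m}\cap K$, so $x$ is a non-unit in $V$ precisely when $x\in\mathfrak{m}$; the task reduces to bounding the maximal ideals of $R$ that contain $xR$. Each such maximal ideal sits above some minimal prime of $xR$, and treedness forces the primes above any given prime of $R$ to form a chain, so each minimal prime of $xR$ lies under at most one maximal ideal. Combining this with the finiteness of minimal primes supplied by Lemma \ref{lemma:treed-MaxR} yields local finiteness. The delicate point I anticipate is keeping the bookkeeping between valuation overrings of $D$ and primes of $R$ clean throughout; Lemma \ref{lemma:VbWb} is available to reinforce that distinct elements of $\Delta$ produce distinct (indeed pairwise comaximal) maximal ideals of $R$, but the essential content of the argument is the combinatorics of Prüfer spectra together with Lemma \ref{lemma:treed-MaxR}.
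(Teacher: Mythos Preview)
Your overall architecture matches the paper's proof: pass to $R=\Kr(D,\Delta)$, identify $\Max(R)$ with $\Delta$ (equivalently $\Delta^b$), apply Lemma~\ref{lemma:treed-MaxR}, and then argue that every nonzero ideal of $R$ sits in only finitely many maximal ideals. The setup and the use of Lemma~\ref{lemma:treed-MaxR} are fine.

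The gap is in the final step. You write that ``treedness forces the primes above any given prime of $R$ to form a chain, so each minimal prime of $xR$ lies under at most one maximal ideal.'' This is false: treed means the primes \emph{below} a given prime form a chain, not the primes above. In a general Pr\"ufer domain a nonzero prime can perfectly well be contained in several maximal ideals, so treedness alone does not give the conclusion you need. This is exactly the point where the hypothesis $VW=K$ must enter, and it is precisely what Lemma~\ref{lemma:VbWb} is for: it upgrades $VW=K$ to $V^bW^b=K(X)$, which says that any valuation overring of $R$ containing both $V^b$ and $W^b$ is $K(X)$ itself, i.e., any prime of $R$ contained in two distinct maximal ideals must be $(0)$. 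That is the statement you need (every nonzero prime lies under a unique maximal ideal), and combining it with the finiteness of minimal primes from Lemma~\ref{lemma:treed-MaxR} gives local finiteness. Your closing remark that Lemma~\ref{lemma:VbWb} merely ``reinforces'' that the maximal ideals are ``pairwise comaximal'' undersells and misplaces it: maximal ideals are always pairwise comaximal; the lemma is doing the real work of separating the branches of $\Spec(R)$ above height zero, and without it the argument does not go through.
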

\begin{proof}
Let $\Delta^b:=\{V^b\mid V\in\Delta\}$, and let $R:=\Kr(D,\Delta)$: then (since, in particular, $\Delta$ is compact), $\Zar(R)$ is equal to $(\Delta^b)^\uparrow$.

Since $R$ is a B\'ezout domain, it follows that $\Spec(R)\simeq(\Delta^b)^\uparrow$, while $\Max(R)\simeq\Delta^b$; in particular, $\Max(R)$ is Noetherian, and thus by Lemma \ref{lemma:treed-MaxR} every ideal of $R$ has only finitely many minimal primes. However, since $V^bW^b=K(X)$ for every $V\neq W$ in $\Delta$ (by Lemma \ref{lemma:VbWb}), it follows that every nonzero prime of $R$ is contained in only one maximal ideal; therefore, every ideal of $R$ is contained in only finitely many maximal ideals, and thus the family $\{R_M\mid M\in\Max(R)\}$ is locally finite. This family coincides with $\Delta^b$; since $\Delta^b$ is locally finite, also $\Delta$ must be locally finite, as claimed.
\end{proof}

We say that two valuation domains $V,W\in\Zar(D)\setminus\{K\}$ are \emph{dependent} if $VW\neq K$. Since $\Zar(D)$ is a tree, being dependent is an equivalence relation on $\Zar(D)\setminus\{K\}$; we call an equivalence class a \emph{dependency class}. If $\Zar(D)$ is finite-dimensional (i.e., if every valuation overring of $D$ has finite dimension) then the dependency classes of $\Zar(D)$ are exactly the sets in the form $\{W\in\Zar(D)\mid W\subseteq V\}$, as $V$ ranges among the one-dimensional valuation overrings of $D$.

Under this terminology, the previous theorem implies that, if $D$ is local and $\Zar(D)$ is Noetherian, then $\Zar(D)$ can only have finitely many dependency classes: indeed, otherwise, we could form a Noetherian but not locally finite subset of $\Zar(D)$ by taking one minimal overring in each dependency class, against the theorem. We actually can say (and will need) something more.

Given a set $X\subseteq\Zar(D)$, we define $\comp(X)$ as the set of all valuation overrings of $D$ that are comparable with some elements of $X$; i.e.,
\begin{equation*}
\comp(X):=\{W\in\Zar(D)\mid \exists~V\in X\text{~such that~}W\subseteq V\text{~or~}V\subseteq W\}.
\end{equation*}
If $X=\{V\}$ is a singleton, we write $\comp(V)$ for $\comp(X)$. Note that, for every subset $X$, $\comp(\comp(X))=\Zar(D)$, since $\comp(X)$ contains the quotient field of $D$.

The purpose of the following propositions is to show that, if $D$ is local and $\Zar(D)$ is Noetherian, then $\Zar(D)$ can be written as $\comp(W)$ for some valuation overring $W\neq K$. The first step is showing that $\Zar(D)$ is equal to $\comp(X)$ for some finite $X$.
\begin{prop}\label{prop:fincomp}
Let $D$ be a local integral domain. If $\Zarmin(D)$ is Noetherian, then there are valuation overrings $W_1,\ldots,W_n$ of $D$, $W_i\neq K$, such that $\Zar(D)=\comp(W_1)\cup\cdots\cup\comp(W_n)$.
\end{prop}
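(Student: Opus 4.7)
The plan is to turn the problem into a finite open-cover argument on $\Zarmin(D)$, then use Lemma~\ref{lemma:treed-MaxR} applied to the Kronecker function ring of $\Zarmin(D)$ to pass from that finite subcover to the desired finite family of $W_i$'s.

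I assume $D$ is not a field (otherwise the statement is vacuous) and let $M$ denote its maximal ideal. For each nonzero $x \in M$ I set
\[
U_x := \{W \in \Zar(D) \mid x \in \mathfrak{m}_W\} = \Zar(D) \setminus \B(1/x),
\]
which is Zariski open. Because $D$ is local, the center of any $V \in \Zarmin(D)$ is a nonzero prime of $D$ contained in $M$, so any $x$ in that center places $V$ in $U_x$. Hence $\{U_x : x \in M \setminus \{0\}\}$ is an open cover of $\Zarmin(D)$; since $\Zarmin(D)$ is Noetherian (hence quasi-compact), finitely many $U_{x_1},\ldots,U_{x_m}$ already cover it. Because each $\mathfrak{m}_W$ is prime, their union equals $U_y$ for $y := x_1\cdots x_m$, so in fact $\Zarmin(D) \subseteq U_y$ for a single $y \in M \setminus \{0\}$.

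Now let $R := \Kr(D,\Zarmin(D))$: this is B\'ezout (hence treed) and $\Max(R) \simeq \Zarmin(D)$ is Noetherian, so Lemma~\ref{lemma:treed-MaxR} gives that $yR$ has only finitely many minimal primes. Under the homeomorphism $\Spec(R) \simeq \Zar(D)$, $P \mapsto R_P \cap K$, which reverses inclusions, the closed set $V(yR) \subseteq \Spec(R)$ corresponds to $U_y \subseteq \Zar(D)$, and the minimal primes of $yR$ correspond to finitely many valuation overrings $W_1,\ldots,W_n \in U_y$ that are maximal under inclusion. The standard fact that every prime containing $yR$ contains some minimal prime over it translates into: every $V \in U_y$ is contained in some $W_i$. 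Each $W_i \neq K$, since $W_i \in U_y$ forces $y \in \mathfrak{m}_{W_i} \neq 0$.

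Finally, I extend the cover from $\Zarmin(D)$ to all of $\Zar(D)$. Given $V \in \Zar(D)$, Zorn's lemma (chains of valuation overrings of $D$ contained in $V$ intersect to a valuation ring) produces a minimal $V_0 \in \Zarmin(D)$ with $V_0 \subseteq V$; then $V_0 \in U_y$, hence $V_0 \subseteq W_i$ for some $i$. Since both $V$ and $W_i$ are overrings of the valuation ring $V_0$, and the overrings of a valuation ring form a chain, $V$ and $W_i$ are comparable, so $V \in \comp(W_i)$. The main technical point that will need care is the order-reversing nature of the bijection $\Spec(R) \simeq \Zar(D)$: one must verify that minimal primes of $yR$ in $R$ really correspond to \emph{maximal} elements of $U_y$ in $\Zar(D)$ with respect to inclusion of valuation rings, so that Lemma~\ref{lemma:treed-MaxR} delivers the $W_i$ in exactly the required form.
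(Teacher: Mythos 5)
Your proof is correct and follows essentially the same route as the paper: apply Lemma~\ref{lemma:treed-MaxR} to the B\'ezout (hence treed) Kronecker function ring, whose maximal spectrum is homeomorphic to the Noetherian space $\Zarmin(D)$, obtain the $W_i$ from the finitely many minimal primes of a suitable ideal via the order-reversing correspondence, and conclude because the overrings of a valuation ring form a chain. The only difference is the choice of ideal: the paper uses $MR$ (implicitly using that minimal valuation overrings of the local ring $D$ are centered on $M$), while you use the principal ideal $yR$ for a single element $y$ produced by a compactness argument on $\Zarmin(D)$ --- a harmless variant.
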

\begin{proof}
Let $R:=\Kr(D)$ be the Kronecker function ring of $D$. Then, the extension $N:=MR$ of the maximal ideal $M$ of $D$ is a proper ideal of $R$, and the prime ideals containing $N$ correspond to the valuation overrings of $R$ where $N$ survives, i.e., to the valuation overrings of $D$ centered on $M$.

Since $\Zarmin(D)$ is Noetherian, so is $\Max(R)$; since $R$ is treed (being a B\'ezout domain), by Lemma \ref{lemma:treed-MaxR} $N$ has only finitely many minimal primes. Thus, there are finitely many valuation overrings of $D$, say $W_1,\ldots,W_n$, such that every $V\in\Zarmin(D)$ is contained in one $W_i$. We claim that $\Zar(D)=\comp(W_1)\cup\cdots\cup\comp(W_n)$. Indeed, let $V$ be a valuation overring of $D$. Since $\Zar(D)$ is compact, $V$ contains some minimal valuation overring $V'$, and by construction $V'\in\comp(W_i)$ for some $i$; in particular, $W_i\supseteq V'$. The valuation overrings containing $V'$ (i.e., the valuation overrings of $V'$) are linearly ordered; thus, $V$ must be comparable with $W_i$, i.e., $V\in\comp(W_i)$. The claim is proved.
\end{proof}

The following result can be seen as a generalization of the classical fact that, if $X=\{V_1,\ldots,V_n\}$ is finite, then $\Zar(A(X))$ is the union of the various $\Zar(V_i)$ (since $A(X)$ will be a Pr\"ufer domain and its localization at the maximal ideals will be a subset of $X$).
\begin{prop}\label{prop:compX}
Let $D$ be an integral domain and let $X\subseteq\Zar(D)$ be a finite set. Then, $\Zar(A(\comp(X)))=\comp(X)$.
\end{prop}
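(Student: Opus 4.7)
The $(\supseteq)$ inclusion is immediate, since by definition $T := A(\comp(X))$ is contained in every $W \in \comp(X)$. For the reverse, my plan is to pass through the Kronecker function ring $R := \Kr(D,\comp(X))$: by the correspondence recalled in the introduction, if $\comp(X)$ is compact and upward-closed then $W' \mapsto W' \cap K$ gives a homeomorphism $\Zar(R) \simeq \comp(X)$, so showing that some $V \in \Zar(T)$ lies in $\comp(X)$ reduces to showing $R \subseteq V^b$.

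I would first check the compactness and upward-closedness of $\comp(X) = \bigcup_{i=1}^n \comp(V_i)$. Upward-closure is immediate since the overrings of any valuation domain form a chain. For compactness it suffices to treat a single $\comp(V_i) = \{W \mid V_i \subseteq W\} \cup \{W \mid W \subseteq V_i\}$: the first piece is the Zariski-closed set $\bigcap_{v \in V_i}\B(v)$, while the second is identified via Lemma~\ref{lemma:pullback-Zar} with the compact Zariski space $\Zar(V_i/\mathfrak{m}_{V_i}|D^*/\mathfrak{m}_{V_i})$ after passing to the pullback $D^* := D + \mathfrak{m}_{V_i}$ (which realizes $\mathfrak{m}_{V_i}$ as a prime of $D^*$; moreover, $\{W \in \Zar(D) \mid W \subseteq V_i\} = \{W \in \Zar(D^*) \mid W \subseteq V_i\}$, since any such $W$ already contains $\mathfrak{m}_{V_i}$ and hence contains $D^*$).

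The core of the argument is then to prove $R = \Kr(T) := \bigcap_{V \in \Zar(T)} V^b$; for then, applying the Kronecker correspondence to the always-compact set $\Zar(T)$ gives $\Zar(\Kr(T)) \simeq \Zar(T)$, and combining with the first homeomorphism yields $\Zar(T) = \comp(X)$. The inclusion $R \supseteq \Kr(T)$ is clear. For the reverse, the key observation is that for $\alpha = f/g$ with $f = \sum a_i X^{n_i}$ and $g = \sum b_j X^{\ell_j}$, the Gauss-extension criterion $\alpha \in W^b \iff \min_i w(a_i) \geq \min_j w(b_j)$ exhibits the support set
\begin{equation*}
\mathrm{supp}(\alpha) := \{W \in \Zar(D) \mid \alpha \in W^b\} = \bigcup_j \bigcap_i \B(a_i/b_j)
\end{equation*}
as a \emph{finite} union of basic Zariski-closed sets, hence Zariski-closed in $\Zar(D)$. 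A short computation identifies the Zariski closure of $\comp(X)$ in $\Zar(D)$ with $\Zar(T) = \bigcap_{x \in T}\B(x)$ (the Zariski-closed supersets of $\comp(X)$ are precisely those of the form $\bigcap_\gamma\B(S_\gamma)$ with each $S_\gamma \subseteq T$). So for any $\alpha \in R$, the inclusion $\comp(X) \subseteq \mathrm{supp}(\alpha)$ together with the closedness of $\mathrm{supp}(\alpha)$ forces $\Zar(T) \subseteq \mathrm{supp}(\alpha)$, giving $\alpha \in V^b$ for every $V \in \Zar(T)$.

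I expect the main obstacle to be setting up this topological closure calculation cleanly: the whole argument turns on recognizing that the Zariski closure of $\comp(X)$ in $\Zar(D)$ is exactly $\Zar(T)$, and that the finiteness of the polynomials $f$ and $g$ in any $\alpha \in K(X)$ is what makes $\mathrm{supp}(\alpha)$ a closed set able to absorb this closure.
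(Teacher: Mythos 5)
Your reduction of the problem to the equality $\Kr(D,\comp(X))=\Kr(T)$ is legitimate, and the peripheral steps are fine: $\comp(X)$ is indeed upward closed and compact, and the Gauss-extension computation correctly exhibits $\mathrm{supp}(\alpha)$ as a finite union of basic sets. The gap is exactly at the step you flag as the crux, the ``short computation'' that the closure of $\comp(X)$ is $\Zar(T)$; this is not a computation that can be carried out, because it is equivalent to the statement being proved. First, your parenthetical classification of the closed supersets of $\comp(X)$ is wrong: in the topology in which the sets $\B(x)$ are closed, the closed sets are arbitrary intersections of \emph{finite unions} of basic sets, not merely sets of the form $\bigcap_\gamma\B(S_\gamma)$ -- and $\mathrm{supp}(\alpha)$ is itself such a finite union, so it is precisely the kind of closed set your classification omits. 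Second, and more seriously, in that topology the closure of $\comp(X)$ is $\comp(X)$ itself: if $V\notin\comp(X)$, then (since $\comp(X)$ is upward closed) $V$ contains no member of $\comp(X)$, so for each $W\in\comp(X)$ one can pick $x_W\in W\setminus V$; the sets $\B(x_W)$ cover $\comp(X)$, compactness gives a finite subcover, and the corresponding finite union is a closed superset of $\comp(X)$ missing $V$. Hence the assertion ``$\mathrm{cl}(\comp(X))=\Zar(T)$'' is literally the nontrivial inclusion $\Zar(T)\subseteq\comp(X)$, i.e.\ the proposition itself, and your argument is circular at that point. (If instead you meant the convention in which the $\B(x)$ are open -- which is how the paper actually uses them, e.g.\ as open covers in Theorem \ref{teor:global} -- then $\mathrm{supp}(\alpha)$ is open rather than closed and the absorption argument does not even get started.)

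What is missing is a genuine argument excluding a valuation overring $V$ of $T$ that is incomparable with every element of $X$. The paper does this directly: reducing to $X=\{V_1,\ldots,V_n\}$ pairwise incomparable, it notes that $P:=\mathfrak{m}_{V_1}\cap\cdots\cap\mathfrak{m}_{V_n}\subseteq T\subseteq V$, so $PV\subseteq V$; then, assuming $V\notin\comp(X)$, it forms the B\'ezout domain $V\cap V_1\cap\cdots\cap V_n$, uses flatness of $V$ over it to get $PV=\bigcap_i\mathfrak{m}_{V_i}V$, and observes that each $\mathfrak{m}_{V_i}V$ is a $V$-submodule of $K$ not contained in $V$, so (the $V$-submodules of $K$ being totally ordered) their intersection is not contained in $V$ -- a contradiction. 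Some concrete input of this kind (or an equivalent ideal-theoretic argument inside $\Kr(D,\comp(X))$) has to replace your closure claim; the Kronecker/ topological framing by itself does not supply it.
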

\begin{proof}
Since $\comp(V)\subseteq\comp(W)$ if $V\subseteq W$, we can suppose without loss of generality that the elements of $X$ are pairwise incomparable. Let $X=\{V_1,\ldots,V_n\}$, $A_i:=A(\comp(V_i))$ and let $A:=A(\comp(X))=A_1\cap\cdots\cap A_n$. Note that $D\subseteq A$, and thus the quotient field of $A$ coincides with the quotient field of $D$ and of the $V_i$.

If $V\in\comp(X)$, then clearly $A\subseteq V$; thus, $\comp(X)\subseteq\Zar(A)$.

Conversely, let $V\in\Zar(A)$, and let $\mathfrak{m}_i$ be the maximal ideal of $V_i$. Then, $\mathfrak{m}_i\subseteq W$ for every $W\in\comp(V_i)$; in particular, $\mathfrak{m}_i\subseteq A_i$. Therefore, $P:=\mathfrak{m}_1\cap\cdots\cap\mathfrak{m}_n\subseteq A$; since $A\subseteq V$, this implies that $PV\subseteq V$.

Suppose $V\notin\comp(X)$, and let $T:=V\cap V_1\cap\cdots\cap V_n$. Since the rings $V,V_1,\ldots,V_n$ are pairwise incomparable, $T$ is a B\'ezout domain whose localizations at the maximal ideals are $V,V_1,\ldots,V_n$. In particular, $V$ is flat over $T$, and each $\mathfrak{m}_i$ is a $T$-module; hence,
\begin{equation*}
PV=\left(\bigcap_{i=1}^n\mathfrak{m}_i\right)V=\bigcap_{i=1}^n\mathfrak{m}_iV.
\end{equation*}
Since $V$ is not comparable with $V_i$, for each $i$, the set $\mathfrak{m}_i$ is not contained in $V$; in particular, the family $\{\mathfrak{m}_iV\mid i=1,\ldots,n\}$ is a family of $V$-modules not contained in $V$. Since the $V$-submodules of the quotient field $K$ are linearly ordered, the family has a minimum, and thus $\bigcap_{i=1}^n\mathfrak{m}_iV$ is not contained in $V$. However, this contradicts $PV\subseteq V$; hence, $V$ must be in $\comp(X)$, and $\Zar(A)=\comp(X)$.
\end{proof}

The proof of part \ref{prop:locAi:loc} of the following proposition closely follows the proof of \cite[Proposition 1.19]{heinzer_noethintersect-II}.
\begin{prop}\label{prop:locAi}
Let $X:=\{V_1,\ldots,V_n\}$ be a finite family of valuation overrings of the domain $D$, and suppose that $V_iV_j=K$ for every $i\neq j$, where $K$ is the quotient field of $D$. Let $A_i:=A(\comp(V_i))$, and let $A:=A(\comp(X))$. Then:
\begin{enumerate}[(a)]
\item\label{prop:locAi:loc} each $A_i$ is a localization of $A$;
\item\label{prop:locAi:exist} for each ideal $I$ of $A$, there is an $i$ such that $IA_i\neq A_i$;
\item\label{prop:locAi:AiAj} if $i\neq j$, then $A_iA_j=K$.
\end{enumerate}
\end{prop}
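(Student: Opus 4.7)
I treat the parts in the order (c), (b), (a), using Proposition \ref{prop:compX} to identify $\Zar(A_i)$ with $\comp(V_i)$ and $\Zar(A)$ with $\comp(X)=\comp(V_1)\cup\cdots\cup\comp(V_n)$.

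\emph{Part (c).} I show $\Zar(A_iA_j)=\{K\}$, which forces $A_iA_j=K$ since every proper subring of $K$ has a proper valuation overring. Any valuation overring of $A_iA_j$ lies in $\Zar(A_i)\cap\Zar(A_j)=\comp(V_i)\cap\comp(V_j)$, so it suffices to show this intersection is $\{K\}$. A four-way case analysis on $W\in\comp(V_i)\cap\comp(V_j)$ with $W\neq K$, according to whether $W$ is above or below each of $V_i$ and $V_j$, produces a contradiction in each case; the delicate case is $W\subseteq V_i\cap V_j$, where the linear order on the primes of the valuation domain $W$ forces one of $\mathfrak{m}_{V_i}\cap W,\mathfrak{m}_{V_j}\cap W$ to contain the other, hence $V_i,V_j$ to be comparable, contradicting $V_iV_j=K$.

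\emph{Part (b).} Given a proper ideal $I$ of $A$, I take a maximal ideal $M\supseteq I$ and a valuation overring $W$ of $A$ centered on $M$ (existence is classical). Since $W\in\comp(X)$, there is some index $i$ with $W\in\comp(V_i)$, hence $A_i\subseteq W$ by Proposition \ref{prop:compX}, and the contracted prime $\mathfrak{m}_W\cap A_i$ is a proper prime of $A_i$ containing $IA_i$, so $IA_i\neq A_i$.

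\emph{Part (a).} Following Heinzer's argument, I take $S_i:=A\cap A_i^\times$, the elements of $A$ that are units in every $W\in\comp(V_i)$, and aim to show $A_i=S_i^{-1}A$. The inclusion $S_i^{-1}A\subseteq A_i$ is immediate. For the reverse, given $x\in A_i$, I must construct $s\in A$ that is a unit in every $W\in\comp(V_i)$ and satisfies $sx\in W'$ for every $W'\in\comp(V_j)$ with $j\neq i$. The construction draws on part (c): for each $j\neq i$ the relation $A_iA_j=K$ supplies a finite identity $1=\sum_k a_{j,k}b_{j,k}$ with $a_{j,k}\in A_i$ and $b_{j,k}\in A_j$, and, combined with the disjointness $\comp(V_i)\cap\comp(V_j)=\{K\}$ proved in (c), these identities are assembled into the required denominator $s$.

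\textbf{Main obstacle.} Part (a) is the substantive step: the conditions on $s$ concern infinitely many valuations within each $\comp(V_k)$, and weak approximation at the finitely many $V_k$'s alone is not enough, because each $\comp(V_k)$ may contain valuations of rank greater than that of $V_k$ whose behaviour is not controlled by $v_{V_k}$. The key is to exploit the holomorphy-ring structure of each $A_j$ together with the algebraic identities from (c) to package those infinitely many local conditions into one finite relation.
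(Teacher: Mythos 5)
Your parts (b) and (c) are correct and essentially identical to the paper's arguments: for (c), Proposition \ref{prop:compX} gives $\Zar(A_i)\cap\Zar(A_j)=\comp(V_i)\cap\comp(V_j)=\{K\}$, so $K$ is the only valuation overring of $A_iA_j$ and hence $A_iA_j=K$; for (b), the center of a valuation overring lying in some $\comp(V_i)$ is exactly the paper's proof. (Both your case analysis in (c) and the paper tacitly use $V_i\neq K$.) Part (a), however, is not a proof. Your reduction is sound -- with $S_i=A\cap A_i^\times$ one must produce, for $x\in A_i$, an element $s\in A$ that is a unit in every $W\in\comp(V_i)$ and satisfies $sx\in W'$ for every $W'\in\comp(V_j)$, $j\neq i$ -- and your diagnosis that these are infinitely many conditions not controlled by approximation at $V_1,\ldots,V_n$ is exactly right. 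But the mechanism you propose cannot supply $s$: the identity $1=\sum_k a_{j,k}b_{j,k}$ with $a_{j,k}\in A_i$, $b_{j,k}\in A_j$ is vacuous, since $1\in A_i\cap A_j$ already, so the relation $A_iA_j=K$ by itself yields no usable finite datum; the sentence ``these identities are assembled into the required denominator $s$'' is precisely the step that is missing. As written, part (a) restates the obstacle rather than overcoming it.

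For comparison, the paper closes this gap by reducing (by symmetry and induction) to showing that $B:=A_2\cap\cdots\cap A_n$ is a localization of $A=A_1\cap B$, and by invoking \cite[Proposition 1.13]{heinzer_noethintersect-II}. One first checks that the Jacobson radical $J$ of $B$ is nonzero (it contains $\mathfrak{m}_{V_2}\cap\cdots\cap\mathfrak{m}_{V_n}$) and is not contained in any valuation overring $W\neq K$ of $V_1$ (the independence argument already used in Proposition \ref{prop:compX}); Heinzer's result, applied to $B\cap W$, then produces for each such $W$ an element $s_W\in\mathfrak{m}_W$ that is a unit of $B$. Each $s_W$ lies in $A$, and inverting the set $T$ of all the $s_W$ leaves $B$ unchanged while forcing $T^{-1}A_1=K$, because no $W\neq K$ in $\Zar(A_1)=\comp(V_1)$ survives; hence $T^{-1}A=T^{-1}B\cap T^{-1}A_1=B$. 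Some device of this strength -- producing, for every proper valuation overring $W$ of $V_1$, a unit of $B$ inside $\mathfrak{m}_W$ -- is what your sketch would have to supply to complete part (a).
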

\begin{proof}
\ref{prop:locAi:loc} By induction and symmetry, it is enough to prove that $B:=A_2\cap\cdots\cap A_n$ is a localization of $A$. Let $J$ be the Jacobson radical of $B$: then, $J\neq(0)$, since it contains the intersection $\mathfrak{m}_{V_2}\cap\cdots\cap\mathfrak{m}_{V_n}$. Furthermore, if $W\neq K$ is a valuation overring of $V_1$, then $J\nsubseteq W$, since otherwise (as in the proof of Proposition \ref{prop:compX})  $\mathfrak{m}_{V_2}\cap\cdots\cap\mathfrak{m}_{V_n}$ would be contained in $\mathfrak{m}_W\cap(W\cap V_2\cap\cdots\cap V_n)$, against the fact that $\{W,V_2,\ldots,V_n\}$ are independent valuation overrings.

Hence, for every such $W$ we can apply \cite[Proposition 1.13]{heinzer_noethintersect-II} to $D:=B\cap W$, obtaining that $B$ is a localization of $D$, say $B=S^{-1}D$, where $S$ is a multiplicatively closed subset of $D$; in particular, there is a $s_W\in S\cap\mathfrak{m}_W$. Each $s_W$ is in $B\cap A_1=A$ (since $\mathfrak{m}_W$ is contained in every member of $\comp(V_1)$); let $T$ be the set of all $s_W$. Then,
\begin{equation*}
T^{-1}A=T^{-1}(B\cap A_2)=T^{-1}B\cap T^{-1}A_1.
\end{equation*}
Each $s_W$ is a unit of $B$, and thus $T^{-1}B=B$. On the other hand, no valuation overring $W\neq K$ of $V_1$ can be an overring of $T^{-1}A_1$, since $T$ contains $s_W$, which is inside the maximal ideal of $W$. Since $\Zar(A_1)=\comp(V_1)$, it follows that $T^{-1}A_1=K$, and thus $T^{-1}A=B$; in particular, $B$ is a localization of $A$.

\ref{prop:locAi:exist} Without loss of generality, we can suppose $I=P$ to be prime. There is a valuation overring $W$ of $A$ whose center on $A$ is $P$; since $\Zar(A)=\comp(X)$ by Proposition \ref{prop:compX}, there is a $V_i$ such that $W\in\comp(V_i)$. Hence, $PA_i\neq A_i$.

\ref{prop:locAi:AiAj} By Proposition \ref{prop:compX}, $\Zar(A_i)\cap\Zar(A_j)=\{K\}$. It follows that $K$ is the only common valuation overring of $A_iA_j$; in particular, $A_iA_j$ must be $K$.
\end{proof}

By \cite[Proposition 4.3]{starloc}, Proposition \ref{prop:locAi} can also be rephrased by saying that the set $\{A_1,\ldots,A_n\}$ is a \emph{Jaffard family} of $A$, in the sense of \cite[Section 6.3]{fontana_factoring}.

\begin{prop}\label{prop:MaxAcomp}
Let $D$ be an integrally closed domain; suppose that $\Zar(D)=\comp(V_1)\cup\cdots\cup\comp(V_n)$, where $X:=\{V_1,\ldots,V_n\}$ is a family of incomparable valuation overrings of $D$ such that $V_iV_j=K$ if $i\neq j$. Then:
\begin{enumerate}[(a)]
\item\label{prop:MaxAcomp:inj} the restriction of the center map $\gamma$ to $X$ is injective;
\item\label{prop:MaxAcomp:max} $|\Max(D)|\geq|X|$.
\end{enumerate}
\end{prop}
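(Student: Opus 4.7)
The strategy is to exploit the structure of $\{A_1,\ldots,A_n\}$ as a Jaffard family of $D$. Since $D$ is integrally closed and $\Zar(D)=\comp(X)$, the equality $D=\bigcap_{V\in\Zar(D)}V=A(\comp(X))$ puts us directly in the setting of Proposition \ref{prop:locAi}, so $A_i=S_i^{-1}D$ for multiplicative sets $S_i\subseteq D$, and $A_iA_j=K$ whenever $i\neq j$. The whole proof will hinge on one observation: if a nonzero prime $Q$ of $D$ satisfies $Q\cap S_i=\emptyset=Q\cap S_j$ for some $i\neq j$, then $Q$ extends to a proper prime of the compositum $A_iA_j=K$, which is absurd.

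For \ref{prop:MaxAcomp:inj}, set $P_i:=\gamma(V_i)=\mathfrak{m}_{V_i}\cap D$. Proposition \ref{prop:compX} gives $V_i\in\comp(V_i)=\Zar(A_i)$, so $A_i\subseteq V_i$, and then $\mathfrak{m}_{V_i}\cap A_i$ is a prime of $A_i=S_i^{-1}D$, forcing $P_i\cap S_i=\emptyset$. Moreover $P_i\neq(0)$ since $V_i\neq K$: writing any nonzero $x\in\mathfrak{m}_{V_i}$ as $a/b$ with $a,b\in D$ yields $a=xb\in P_i\setminus\{0\}$. If $P_i=P_j$ for distinct $i,j$, both $S_i$ and $S_j$ are avoided by $P_i$, contradicting the observation above.

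For \ref{prop:MaxAcomp:max}, for each $i$ I would apply Zorn's lemma to the set $\{V\in\Zar(D)\mid V\subseteq V_i\}$, which is nonempty (it contains $V_i$) and closed under intersections of descending chains, since the intersection of a chain of valuation overrings of $D$ is again a valuation overring of $D$. Let $W_i$ be a minimal element so obtained. Any $V'\in\Zar(D)$ with $V'\subsetneq W_i$ would satisfy $V'\subseteq V_i$ and thus lie in the same slice, contradicting minimality; hence $W_i\in\Zarmin(D)$. The standard fact that minimal valuation overrings have maximal center — provable, if needed, by composing $W_i$ with a non-trivial valuation on the fraction field of $D/\gamma(W_i)$ whenever $\gamma(W_i)$ fails to be maximal, thereby producing a strictly smaller $V'\in\Zar(D)$ — yields $M_i:=\gamma(W_i)\in\Max(D)$. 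Since $W_i\subseteq V_i$ puts $W_i\in\comp(V_i)=\Zar(A_i)$, the inclusion $A_i\subseteq W_i$ gives $M_i\cap S_i=\emptyset$, and if $M_i=M_j$ for $i\neq j$ the observation again forces $M_i=(0)$, against $M_i\in\Max(D)$ (the case $D=K$ being trivial). Thus $M_1,\ldots,M_n$ are $n$ distinct maximal ideals of $D$.

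The main obstacle I foresee is not the bookkeeping with the $S_i$, which the Jaffard-family structure makes routine, but rather the two auxiliary facts in part \ref{prop:MaxAcomp:max}: that minimal elements of the slice $\{V\in\Zar(D)\mid V\subseteq V_i\}$ exist and are in fact globally minimal in $\Zar(D)$, and that every such global minimum has maximal center in $D$. Both are reasonably standard and can be handled with brief sub-arguments as indicated above.
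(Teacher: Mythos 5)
Your argument is correct. For part \ref{prop:MaxAcomp:inj} you follow essentially the paper's route: the paper argues that if $P=\gamma(V_i)=\gamma(V_j)$ then $D_P$ would be a common overring of the localizations $A_i$ and $A_j$, contradicting $A_iA_j=K$; your multiplicative-set phrasing is the same argument, and note that the honest justification of your ``key observation'' is precisely that $S_i,S_j\subseteq D\setminus Q$ forces $A_i,A_j\subseteq D_Q$, hence $K=A_iA_j\subseteq D_Q$, which is impossible for $Q\neq(0)$ — saying that $Q$ ``extends to a proper prime of the compositum'' presupposes this step, so spell it out. For part \ref{prop:MaxAcomp:max} you genuinely diverge. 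The paper stays inside Proposition \ref{prop:locAi}: by \ref{prop:locAi:exist} and \ref{prop:locAi:AiAj} every maximal ideal survives in exactly one $A_i$, hence contains at most one of the (distinct, by part \ref{prop:MaxAcomp:inj}) centers $\gamma(V_i)$, and choosing a maximal ideal above each center gives $|\Max(D)|\geq|X|$. You instead produce, for each $i$, a minimal valuation overring $W_i\subseteq V_i$ (your Zorn argument is fine, since the intersection of a chain of valuation rings of $K$ is again a valuation ring of $K$; the paper obtains the same existence from compactness of $\Zar(D)$ in the proof of Proposition \ref{prop:fincomp}), invoke the standard fact that minimal valuation overrings have maximal center — your composite-valuation sketch works once the auxiliary valuation is taken on the residue field of $W_i$ rather than on the quotient field of $D/\gamma(W_i)$ (extend a nontrivial valuation ring of that quotient field dominating the image of a strictly larger maximal ideal, or quote \cite[Theorem 19.6]{gilmer}) — and then separate the resulting centers $M_i$ by the same localization observation. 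Your route has the advantage of not using part \ref{prop:MaxAcomp:inj} nor Proposition \ref{prop:locAi}\ref{prop:locAi:exist}, at the price of two auxiliary (standard) facts about minimal valuation overrings; the paper's version is shorter given Proposition \ref{prop:locAi} and yields slightly more, namely that each maximal ideal contains at most one of the centers $\gamma(V_i)$.
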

\begin{proof}
\ref{prop:MaxAcomp:inj} If $P$ is the image of both $V_i$ and $V_j$, then $P$ survives in both $A_i$ and $A_j$: however, since $A_i$ and $A_j$ are localizations of $A$ (Proposition \ref{prop:locAi}\ref{prop:locAi:loc}), $A_P$ would be a common overring of $A_i$ and $A_j$, against the fact that $A_iA_j=K$ (Proposition \ref{prop:locAi}\ref{prop:locAi:AiAj}). Therefore, the center map is injective on $X$.

\ref{prop:MaxAcomp:max} Let $M$ be a maximal ideal: then, there is a unique $i$ such that $MA_i\neq A_i$. In particular, $M$ can contain only one element of $\gamma(X)$, namely $\gamma(V_i)$; thus, $|\Max(D)|\geq|\gamma(X)|=|X|$, as claimed.
\end{proof}

We are ready to prove the pivotal result of the paper.
\begin{teor}\label{teor:pvd}
Let $D$ be an integrally closed local domain. If $\Zarmin(D)$ is a Noetherian space, then $D$ is a pseudo-valuation domain.
\end{teor}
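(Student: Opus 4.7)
The plan is to combine Propositions~\ref{prop:fincomp}--\ref{prop:MaxAcomp} to reduce to the case $\Zar(D)=\comp(W)$ for a single valuation overring $W\neq K$, and then to use the integrally-closed hypothesis to show $\mathfrak{m}_W=M$, whence $D$ is a pseudo-valuation domain with associated valuation $W$.

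First, Proposition~\ref{prop:fincomp} (together with the construction in its proof) yields a decomposition $\Zar(D)=\comp(W_1)\cup\cdots\cup\comp(W_n)$ in which each $W_i$ is centered on $M$ (they correspond to the minimal primes of $MR$ in $R:=\Kr(D)$) and the $W_i$ are pairwise incomparable. Whenever two of them satisfy $W_iW_j\neq K$, I replace the pair by their join $W_i\vee W_j\in\Zar(D)$; the inclusion $\comp(W_i)\cup\comp(W_j)\subseteq\comp(W_i\vee W_j)$ holds because the valuation overrings of $W_i$ are linearly ordered, and it preserves the cover. Iterating, I obtain a decomposition $\Zar(D)=\comp(V_1)\cup\cdots\cup\comp(V_m)$ in which the $V_k$ are pairwise incomparable and pairwise independent, so Proposition~\ref{prop:MaxAcomp}\ref{prop:MaxAcomp:max} forces $m\leq|\Max(D)|=1$; thus $\Zar(D)=\comp(W)$ for a single $W\neq K$.

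The decisive step is to prove $\mathfrak{m}_W\subseteq D$. Suppose $x\in\mathfrak{m}_W$ with $x\notin D$. Since $D$ is integrally closed in $K$, the element $x$ is not integral over $D$, so $D=\bigcap\Zar(D)$ provides some $V\in\Zar(D)$ with $x\notin V$. As $V\in\comp(W)$ and $x\in W$, we cannot have $V\supseteq W$, so $V\subsetneq W$; but then $x^{-1}\in V\subseteq W$, contradicting the fact that $x\in\mathfrak{m}_W$ forces $x^{-1}\notin W$. Hence $\mathfrak{m}_W\subseteq D$, so $\mathfrak{m}_W=\mathfrak{m}_W\cap D$ is a prime ideal $P$ of $D$.

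It remains to identify $P$ with $M$, so that $D$ and $W$ share the maximal ideal and $D$ is a PVD with associated $W$. The main obstacle is that the combining step need not preserve $M$-centering: the join of two $M$-centered valuations may be centered on a prime strictly below $M$. My plan to handle this is to apply Lemma~\ref{lemma:pullback-Zar} when $P\subsetneq M$, identifying $\{U\in\Zar(D):U\subseteq W\}$ with $\Zar(W/P\mid D/P)$ and applying the same argument to the integrally-closed local domain $D/P$ (whose minimal Zariski space inherits Noetherianity from $\Zar(D)$); the strictly descending chain of centers produced by this recursion must terminate, yielding a valuation overring of $D$ whose maximal ideal is $M$.
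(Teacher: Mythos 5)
Your first two steps follow the paper: Proposition~\ref{prop:fincomp}, the passage to joins of dependent $W_i$'s, and Proposition~\ref{prop:MaxAcomp}\ref{prop:MaxAcomp:max} with $|\Max(D)|=1$ give $\Zar(D)=\comp(W)$ for a single $W\neq K$, exactly as in the paper. Your ``decisive step'' is also correct: since $D$ is integrally closed, $D=\bigcap\Zar(D)$, and the $x$ versus $x^{-1}$ argument does show $\mathfrak{m}_W\subseteq D$ for \emph{any} $W$ with $\comp(W)=\Zar(D)$; this is essentially the paper's observation that the maximal ideal of such a valuation ring lies in every member of $\Zar(D)$.

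The gap is in the last step, and it is twofold. First, to ``apply the same argument'' to $D/P$ you must know that $W/P$ is the quotient field of $D/P$: Propositions~\ref{prop:fincomp}--\ref{prop:MaxAcomp} concern valuation overrings of a domain inside its own quotient field, whereas Lemma~\ref{lemma:pullback-Zar} only identifies $\{U\in\Zar(D):U\subseteq W\}$ with $\Zar(W/P\,|\,D/P)$, and a priori $W/P$ could be transcendental over $\mathrm{Frac}(D/P)$. Ruling this out is not free: one needs Proposition~\ref{prop:noncampo} (if $D/P$ is not a field, Noetherianity of $\Zarmin(W/P\,|\,D/P)\simeq\Zarmin(D)$ forces the transcendence degree to be $0$) together with the fact that $D/P=\bigcap\Zar(W/P\,|\,D/P)$ is integrally closed in $W/P$, whence $W/P=\mathrm{Frac}(D/P)$; your sketch omits this entirely. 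Second, and more seriously, the termination of the recursion is asserted but not proved. Each iteration produces a strictly larger center $P\subsetneq P_1\subsetneq\cdots\subseteq M$ (the centers ascend, while the corresponding valuation rings $W\supsetneq W^{(1)}\supsetneq\cdots$ descend), and since $D$ need not be Noetherian there is no obvious obstruction to an infinite such chain; Noetherianity of $\Zarmin(D)$ gives no bound on chains of primes (for instance, $\Zarmin$ of an infinite-dimensional valuation domain is a single point). This termination problem is precisely what the paper's proof is built to avoid: instead of recursing, it takes $V_0$ to be the \emph{minimum} (the intersection) of the chain $\Delta=\{W\in\Zar(D):\comp(W)=\Zar(D)\}$, so that one application of your descent step contradicts the minimality of $V_0$. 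Your plan can be repaired by making that replacement before quotienting, but as written the claim ``the chain must terminate'' is an unproved assertion at the crux of the proof.
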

\begin{proof}
Since $D$ is local, by Proposition \ref{prop:fincomp} there are $W_1,\ldots,W_n$, not equal to $K$, such that $\Zar(D)=\comp(W_1)\cup\cdots\cup\comp(W_n)$. By eventually passing to bigger valuation domains, we can suppose without loss of generality that $W_iW_j=K$ if $i\neq j$; since $D$ is local, by Proposition \ref{prop:MaxAcomp}\ref{prop:MaxAcomp:max} we have $1\geq n$, and so $\Zar(D)=\comp(V)$ for some $V\neq K$.

Let $\Delta$ be the set of $W\in\Zar(D)$ such that $\comp(W)=\Zar(D)$; then, $\Delta$ is a chain, and thus it has a minimum in $\Zar(D)$, say $V_0$ (explicitly, $V_0$ is the intersection of the elements of $\Delta$); furthermore, clearly $V_0\in\Delta$. Since $V\in\Delta$, we have $V_0\subseteq V$, and in particular $V_0\neq K$. Let $M$ be the maximal ideal of $V_0$: then, $M$ is contained in every $W\in\comp(V_0)=\Zar(D)$, and thus $M\subseteq D$.

Consider now the diagram
\begin{equation*}
\begin{tikzcd}
D\arrow[hookrightarrow]{d}\arrow[two heads]{r}{\pi} & D/M\arrow[hookrightarrow]{d}\\
V_0\arrow[two heads]{r }{\pi} & V_0/M.
\end{tikzcd}
\end{equation*}
Clearly, $D=\pi^{-1}(D/M)$; let $F_1$ be the quotient field of $D/M$. By Lemma \ref{lemma:pullback-Zar}, the set of minimal valuation overrings of $D$ is homeomorphic to $\Zarmin(V_0/M|D/M)$, which thus is Noetherian; by Proposition \ref{prop:noncampo}, it follows that either $D/M$ is a field and $\trdeg_{D/M}(V_0/M)=1$ (in which case $D$ is a pseudo-valuation domain with associated valuation domain $V_0$) or $\trdeg_{F_1}(V_0/M)=0$.

In the latter case, we note that $D/M$ is integrally closed in $V_0/M$, since $D/M$ is the intersection of all the elements of $\Zar(V_0/M|D/M)$; hence, $V_0/M$ is the quotient field of $D/M$. If $D/M$ is not a field, by the same argument of the first part of the proof it follows that $\Zar(D/M)=\comp(W_0)$ for some valuation overring $W_0\neq F_1$; however, this contradicts the choice of $V_0$, because $\pi^{-1}(W_0)$ would be comparable with every element of $\Zar(D)$. Hence, it must be $V_0/M=D/M$, i.e., $V_0=D$; that is, $D$ is a valuation domain and, in particular, a pseudo-valuation domain.
\end{proof}

With this result, we can find the possible structures of $\Zar(D)$ and $\Zarmin(D)$, when $D$ is local and $\Zarmin(D)$ is Noetherian. Indeed, $D$ is a pseudo-valuation domain; let $V$ be its associated valuation overring. Then, we have two cases: either $D=V$ (i.e., $D$ itself is a valuation domain) or $D\neq V$.

In the first case, $\Zarmin(D)$ is a singleton, while $\Zar(D)$ is homeomorphic to $\Spec(D)$; in particular, $\Zar(D)$ is linearly ordered, and it is a Noetherian space if and only if $\Spec(D)$ is Noetherian.

In the second case, we can separate $\Zar(D)$ into two parts: $\Zarmin(D)$ and $\Delta:=\Zar(D)\setminus\Zarmin(D)$. The former must be isomorphic to $\Zarmin(L|F)=\Zar(L|F)\setminus\{L\}$ (where $F$ and $L$ are the residue fields of $D$ and $V$, respectively); on the other hand, the latter is linearly ordered, and is composed by the valuation overrings of $V$, so in particular it is homeomorphic to $\Spec(V)$, which is (set-theoretically) equal to $\Spec(D)$. In other words, $\Zar(D)$ is composed by a long ``stalk'' ($\Delta$), under which there is an infinite family of minimal valuation overrings. In particular, we get the following.

\begin{prop}\label{prop:Zar-pvd-noeth}
Let $D,V,F,L$ as above. Then:
\begin{enumerate}[(a)]
\item $\Zarmin(D)$ is Noetherian if and only if $\Zar(L|F)$ is Noetherian.
\item $\Zar(D)$ is Noetherian if and only if $\Zar(L|F)$ and $\Spec(V)$ are Noetherian.
\end{enumerate}
\end{prop}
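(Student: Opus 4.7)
My plan is to read both equivalences off directly from the structural decomposition of $\Zar(D)$ recalled in the paragraphs above the statement, using throughout that Noetherianity of a topological space is preserved by taking subspaces and by finite unions.

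The case $D = V$ is trivial: $\Zarmin(D) = \{D\}$ is a singleton, $\Zar(D) \simeq \Spec(V)$ via the center map, and $L = F$ makes $\Zar(L|F) = \{F\}$; both (a) and (b) then become tautologies.

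So assume $D \neq V$. I would apply Lemma \ref{lemma:pullback-Zar} with $P = M = \mathfrak{m}_V$ to obtain a homeomorphism $\{W \in \Zar(D) \mid W \subseteq V\} \simeq \Zar(L|F)$ that restricts to $\Zarmin(D) \simeq \Zarmin(L|F)$. Since $\Zar(L|F) = \Zarmin(L|F) \cup \{L\}$ is a finite union, Noetherianity transfers between the two spaces, yielding (a). For (b), I would first verify that every $W \in \Zar(D)$ is comparable with $V$: since $D$ is local with maximal ideal $M = \mathfrak{m}_V$, each such $W$ has $M \subseteq \mathfrak{m}_W$, and a short check (reducing to $V \cap W$ and using that its two maximal ideals are incomparable whenever $V, W$ are independent) rules out independence. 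Hence
\begin{equation*}
\Zar(D) = \{W \in \Zar(D) \mid W \subseteq V\} \cup \Zar(V),
\end{equation*}
the first piece being homeomorphic to $\Zar(L|F)$ by the above, and the second, since $V$ is a valuation (a fortiori Pr\"ufer) domain, to $\Spec(V)$ via the center map. The subspace/finite-union principle then gives that $\Zar(D)$ is Noetherian iff both $\Zar(L|F)$ and $\Spec(V)$ are.

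The main obstacle is the comparability step for (b), where the PVD hypothesis on $D$ is essential; everything else is routine bookkeeping with Noetherian topological spaces and the identifications supplied by Lemma \ref{lemma:pullback-Zar} and by the valuation property of $V$.
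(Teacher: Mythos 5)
Your part (a) contains a genuine gap. The step ``since $\Zar(L|F)=\Zarmin(L|F)\cup\{L\}$ is a finite union, Noetherianity transfers'' relies on an identity that is false for a general field extension: it holds exactly when $\trdeg_FL\leq 1$, and nothing in the hypotheses bounds the transcendence degree (for instance $D=\insQ+X\insQ(Y,Z)[[X]]$ is a PVD with $F=\insQ$ and $L=\insQ(Y,Z)$, and then $\Zar(L|F)$ contains rank-two valuation rings, which are neither minimal nor equal to $L$). Hence your argument for the implication ``$\Zarmin(D)$ Noetherian $\Rightarrow\Zar(L|F)$ Noetherian'' is incomplete: one must first exclude $\trdeg_FL\geq 2$. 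This is exactly where the paper invokes Proposition \ref{prop:trdeg02}\ref{prop:trdeg02:2} (if $\trdeg_FL\geq 2$ then $\Zarmin(L|F)\simeq\Zarmin(D)$ is not Noetherian --- a nontrivial fact, not something one can wave away) and then Proposition \ref{prop:trdeg1}, or equivalently your union identity, in the remaining case $\trdeg_FL=1$. The converse implication in (a), being a pure subspace argument through Lemma \ref{lemma:pullback-Zar}, is fine.

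Part (b) is essentially correct and in fact takes a slightly different (and cleaner) route than the paper: you cover $\Zar(D)$ by $\{W\in\Zar(D)\mid W\subseteq V\}$ and $\Zar(V)$, which is valid for \emph{every} PVD, whereas the paper splits off $\Zarmin(D)$ and identifies the complement with $\Spec(V)$, an identification that again needs $\trdeg_FL\leq 1$. However, your justification of the comparability of every $W\in\Zar(D)$ with $V$ is wrong as written: it is not true that $M\subseteq\mathfrak{m}_W$ for every $W\in\Zar(D)$ (any $W$ properly containing $V$, e.g.\ $W=K$, satisfies $MW=W$). The claim itself is true, but the check should split on the center $P:=\mathfrak{m}_W\cap D$: if $P=M$, then $\mathfrak{m}_V=M\subseteq\mathfrak{m}_W$ and $W\subseteq V$, since an element $y\in W\setminus V$ would give $y^{-1}\in\mathfrak{m}_V\subseteq\mathfrak{m}_W$, which is impossible; if $P\subsetneq M$, pick $t\in M\setminus P$ and note that for every $v\in V$ one has $tv\in\mathfrak{m}_V\subseteq D$, so $V\subseteq D_P\subseteq W$. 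With (a) repaired by citing Proposition \ref{prop:trdeg02} (and Proposition \ref{prop:trdeg1}) and this comparability step made precise, your proof goes through.
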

\begin{proof}
If $\Zarmin(D)$ is Noetherian, then $\Zarmin(L|F)$ is Noetherian as well. By Propositions \ref{prop:trdeg02} and \ref{prop:trdeg1}, $\Zar(L|F)$ is Noetherian.

If $\Zar(D)$ is Noetherian, so are $\Spec(D)=\Spec(V)$ and $\Delta\simeq\Zar(L|F)$ (in the notation above). Conversely, if $\Zar(L|F)$ and $\Spec(V)$ are Noetherian then so are $\Zarmin(D)$ and $\Delta$, and thus also $\Zarmin(D)\cup\Delta=\Zar(D)$ is Noetherian.
\end{proof}

Furthermore, we can now apply Propositions \ref{prop:trdeg02} and \ref{prop:trdeg1} to characterize when $\Zar(L|F)$ is Noetherian (see the following Corollary \ref{cor:global}).

\medskip

We now study the non-local case.
\begin{lemma}\label{lemma:locpvd}
Let $D$ be an integral domain such that $D_M$ is a PVD for every $M\in\Max(D)$ and, for every $M$, let $V(M)$ be the valuation overring associated to $D_M$. Then, the space $\{V(M)\mid M\in\Max(D)\}$ is homeomorphic to $\Max(D)$.
\end{lemma}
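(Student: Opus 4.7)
The plan is to show the natural map $\phi\colon\Max(D)\to\{V(M)\mid M\in\Max(D)\}$, $M\mapsto V(M)$, is a homeomorphism when the codomain is topologized as a subspace of $\Zar(D)$. Bijectivity is essentially automatic: surjectivity holds by construction, and injectivity follows because the maximal ideal of the associated valuation ring $V(M)$ of the PVD $D_M$ is $MD_M$, so that $\gamma(V(M))=M$, where $\gamma$ denotes the center map. The same observation identifies the inverse of $\phi$ with the restriction of $\gamma$ to $\{V(M)\mid M\in\Max(D)\}$, which is continuous since $\gamma$ is.

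For continuity of $\phi$ itself, it suffices to check that, for every $x\in K$, the preimage of the subbasic open set $\B(x)\cap\{V(M)\mid M\in\Max(D)\}=\{V(M)\mid x\in V(M)\}$ of the codomain is open in $\Max(D)$. The key step is an intrinsic description of $V(M)$: since a PVD's associated valuation ring coincides with its integral closure in the quotient field (part of the Hedstrom--Houston characterization), and since integral closure commutes with localization, one has $V(M)=\overline{D}_M$, where $\overline{D}$ denotes the integral closure of $D$ in $K$ and the subscript is localization at $D\setminus M$.

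With this description in hand, $x\in V(M)$ holds if and only if there exists $s\in D\setminus M$ with $sx\in\overline{D}$, equivalently if and only if the ideal $J_x:=(\overline{D}:_D x)=\{s\in D\mid sx\in\overline{D}\}$ of $D$ is not contained in $M$. Therefore the preimage $\{M\in\Max(D)\mid x\in V(M)\}$ equals the complement in $\Max(D)$ of the closed subset cut out by $J_x$, and so is open. This gives continuity of $\phi$ and concludes the proof.

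The principal obstacle is algebraic rather than topological: the argument hinges on the identification $V(M)=\overline{D}_M$, which relies on two classical facts not recalled in the excerpt, namely that the associated valuation ring of a PVD is its integral closure in the quotient field and that integral closure commutes with localization at a multiplicative subset. Once these are granted, the rest is a routine ideal-theoretic computation.
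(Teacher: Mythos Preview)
Your argument contains a genuine error at the ``key step'': the claim that the associated valuation ring of a PVD coincides with its integral closure is false. For a PVD $A$ with maximal ideal $\mathfrak{m}$ and associated valuation ring $V$, the Hedstrom--Houston result is that $A$ is integrally closed if and only if $A/\mathfrak{m}$ is algebraically closed in $V/\mathfrak{m}$; in particular, whenever the residue extension is transcendental the ring $A$ is already integrally closed but is \emph{not} equal to $V$. A concrete instance is $A=\insQ+X\insQ(T)[[X]]$ with $T$ transcendental: here $V=\insQ(T)[[X]]$, while $\overline{A}=A\neq V$ because $T$ satisfies no monic polynomial over $A$ (reduce modulo $X$). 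Consequently the identity $V(M)=\overline{D}_M$ fails in exactly the cases the lemma is meant to cover (the paper applies it when $\trdeg_FL=1$), and your conductor-ideal computation of $\phi^{-1}(\B(x))$ collapses. The correct intrinsic description is $V(M)=(MD_M:_K MD_M)$, but turning this into an openness statement for $\phi^{-1}(\B(x))$ is not as straightforward as the conductor argument you propose.

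The paper sidesteps continuity of $\phi$ entirely: it observes that the center map $\gamma$ restricts to a bijection $\Delta\to\Max(D)$ (as you also note), and then invokes the fact---recorded in the background section---that $\gamma\colon\Zar(D)\to\Spec(D)$ is both continuous and \emph{closed}. A continuous closed bijection is a homeomorphism, so there is nothing further to check.
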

\begin{proof}
Let $\Delta:=\{V(M)\mid M\in\Max(D)\}$. If $\gamma$ is the center map, then $\gamma(V(M))=M$ for every $M$; thus, $\gamma$ restricts to a bijection between $\Delta$ and $\Max(D)$. Since $\gamma$ is continuous and closed, it follows that it is a homeomorphism.
\end{proof}

\begin{teor}\label{teor:global}
Let $D$ be an integrally closed domain. Then:
\begin{enumerate}[(a)]
\item\label{teor:global:Zarmin} $\Zarmin(D)$ is Noetherian if and only if $\Max(D)$ is Noetherian and $\Zarmin(D_M)$ is Noetherian for every $M\in\Max(D)$;
\item\label{teor:global:Zar} $\Zar(D)$ is Noetherian if and only if $\Spec(D)$ is Noetherian and $\Zar(D_M)$ is Noetherian for every $M\in\Max(D)$.
\end{enumerate}
\end{teor}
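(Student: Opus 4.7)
I treat the two parts in parallel. For the ``$\Rightarrow$'' direction, subspaces and continuous images of Noetherian spaces are Noetherian: $\Zar(D_M) = \bigcap_{s \in D \setminus M}\B(1/s)$ is closed in $\Zar(D)$, so $\Zar(D_M)$ and $\Zarmin(D_M) = \Zarmin(D) \cap \Zar(D_M)$ are subspaces of $\Zar(D)$ and $\Zarmin(D)$ respectively. The center map $\gamma$ is continuous with image $\Spec(D)$, which yields (b); for (a) I need the refinement $\gamma(\Zarmin(D)) = \Max(D)$, which follows by applying Zorn's lemma to the family of valuation overrings of $D$ centered on a fixed $M \in \Max(D)$ (this family is closed under intersections of chains, and the center remains $M$ because $M$ is maximal), producing a minimal element that automatically lies in $\Zarmin(D)$.

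For ``$\Leftarrow$'' of (a), I invoke Theorem \ref{teor:pvd}: each $D_M$ is local, integrally closed, with $\Zarmin(D_M)$ Noetherian, and hence is a pseudo-valuation domain with associated valuation overring $V(M)$. Lemma \ref{lemma:locpvd} identifies $\{V(M) : M \in \Max(D)\}$ topologically with $\Max(D)$. In this PVD setting each minimal valuation overring of $D_M$ is centered on $MD_M$, so taking centers gives a disjoint decomposition $\Zarmin(D) = \bigsqcup_M \Zarmin(D_M)$. I then pass to the Kronecker function ring $R = \Kr(D)$, using that $\Max(R) \simeq \Zarmin(D)$, that the localization $R_{D \setminus M}$ equals $\Kr(D_M)$, and that $\Max(R_{D \setminus M}) \simeq \Zarmin(D_M)$. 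Since $R$ is B\'ezout and hence treed, I plan to adapt Lemma \ref{lemma:treed-MaxR} and the compact-intersection techniques used in its proof to assemble the stabilization of a descending chain of closed subsets of $\Max(R)$ from fiberwise stabilization in each $\Max(R_{D \setminus M})$ over the Noetherian base $\Max(D)$.

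For ``$\Leftarrow$'' of (b), Noetherianity of $\Spec(D)$ forces $\Max(D)$ Noetherian, and $\Zarmin(D_M) \subseteq \Zar(D_M)$ is Noetherian, so part (a) yields $\Zarmin(D)$ Noetherian. The remaining ``upper stalks'' $\Zar(D) \setminus \Zarmin(D)$ are controlled by the structural description after Theorem \ref{teor:pvd}: each $\Zar(D_M)$ decomposes as $\Zarmin(D_M) \cup \Delta_M$, with $\Delta_M$ homeomorphic to $\Spec(V(M))$, and the latter is Noetherian because $\Zar(D_M)$ is (by Proposition \ref{prop:Zar-pvd-noeth}). I then assemble the $\Delta_M$ across $\Max(D)$ by the same Kronecker-ring mechanism as in (a), now exploiting the full Noetherianity of $\Spec(D)$ (not just of $\Max(D)$) to control the centers of the non-minimal valuation overrings.

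The main obstacle throughout is this gluing step in the ``$\Leftarrow$'' directions: in general a continuous closed surjection with Noetherian base and Noetherian fibers need not have Noetherian total space. It is the Pr\"ufer/B\'ezout structure of $\Kr(D)$, the homeomorphism $\Spec(\Kr(D)) \simeq \Zar(D)$, and Lemma \ref{lemma:treed-MaxR} together with the compact-intersection machinery from \cite{compact-intersections} that force the gluing to succeed.
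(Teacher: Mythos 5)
Your two ``$\Rightarrow$'' directions are fine and agree with the paper, and your reduction for ``$\Leftarrow$'' starts correctly: Theorem \ref{teor:pvd} makes each $D_M$ a PVD with associated valuation ring $V(M)$, and Lemma \ref{lemma:locpvd} identifies $\{V(M)\mid M\in\Max(D)\}$ with $\Max(D)$. But from that point on the proposal does not prove anything: the sentence about ``assembling the stabilization of a descending chain of closed subsets of $\Max(R)$ from fiberwise stabilization over the Noetherian base'' is a restatement of the statement to be proved, not an argument. As you yourself observe, a closed continuous surjection with Noetherian base and Noetherian fibers need not have Noetherian total space; fiberwise stabilization of a chain $C_1\supseteq C_2\supseteq\cdots$ in $\Max(\Kr(D))$ together with stabilization of the images in $\Max(D)$ does not bound the stabilization indices uniformly in $M$, and nothing in your outline (B\'ezout-ness of $\Kr(D)$, $\Kr(D)_{D\setminus M}=\Kr(D_M)$, Lemma \ref{lemma:treed-MaxR} quoted as a black box) supplies such a bound. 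So the heart of both ``$\Leftarrow$'' implications is missing.

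The concrete ingredient you need, and which is the actual content of the paper's proof of \ref{teor:global:Zarmin}, is a ``finitely many bad fibers per basic open set'' statement proved on $D$ itself. Take $X\subseteq\Zarmin(D)$ and a cover by subbasic open sets $\B(f_\alpha)$, $f_\alpha\in K$. Since each minimal $W$ satisfies $W\subseteq\beta(W):=V(\mathfrak{m}_W\cap D)$, the cover also covers $\beta(X)$, which is compact inside $\{V(M)\}\simeq\Max(D)$; this yields finitely many $f_1,\ldots,f_n$ with $X=\bigcup_i X_i$, $X_i:=\{W\in X\mid f_i\in\beta(W)\}$. Now fix $f=f_i$ and consider the conductor $I=(D:_Df)$: if $I\nsubseteq M$ then $f\in D_M$ and $\B(f)$ contains the whole fiber of $X_i$ over $M$; if $I\subseteq M$ and the fiber is nonempty, then $f\in V(M)\subseteq D_P$ for every $P\subsetneq M$, so $M$ is minimal over $I$. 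Since $D$ is treed (it is locally a PVD) and $\Max(D)$ is Noetherian, Lemma \ref{lemma:treed-MaxR} gives only finitely many such $M$, and those finitely many fibers are handled by compactness of $\Zarmin(D_M)$. This is exactly the uniform finiteness your Kronecker-ring reformulation leaves unproven; without it (or an equivalent local-finiteness argument) the gluing does not close. Incidentally, for \ref{teor:global:Zar} you also make the second step harder than necessary: once \ref{teor:global:Zarmin} is known, the set $\Zar(D)\setminus\Zarmin(D)$ consists only of the rings $D_P$ with $P$ non-maximal and the rings $V(M)$, and it maps injectively by the center map into the Noetherian space $\Spec(D)$, so no second gluing over $\Max(D)$ is required.
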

\begin{proof}
\ref{teor:global:Zarmin} If $\Zarmin(D)$ is Noetherian, then $\Max(D)$ is Noetherian since it is the image of $\Zarmin(D)$ under the center map, while each $\Zarmin(D_M)$ is Noetherian since they are subspaces of $\Zarmin(D)$.

Conversely, suppose that $\Max(D)$ is Noetherian and that $\Zar(D_M)$ is Noetherian for every $M\in\Max(D)$. By the latter property and Theorem \ref{teor:pvd}, every $D_M$ is a PVD; by Lemma \ref{lemma:locpvd}, the space $\Delta:=\{V(M)\mid M\in\Max(D)\}$ (in the notation of the lemma) is homeomorphic to $\Max(D)$, and thus Noetherian. Let $\beta$ be the map sending a $W\in\Zarmin(D)$ to $V(\mathfrak{m}_W\cap D)$.

Let $X$ be any subset of $\Zarmin(D)$, and let $\Omega$ be an open cover of $X$; without loss of generality, we can suppose $\Omega=\{\B(f_\alpha)\mid\alpha\in A\}$, where the $f_\alpha$ are elements of $K$. Then, $\Omega$ is also a cover of $X':=\{\beta(V)\mid V\in X\}$; since $X'$ is compact (being a subset of the Noetherian space $\Delta$), there is a finite subfamily of $\Omega$, say $\Omega':=\{\B(f_1),\ldots,\B(f_n)\}$, that covers $X'$. For each $i$, let $X_i:=\{V\in X\mid f_i\in\beta(V)\}$; then, $X=X_1\cup\cdots\cup X_n$. We want to find, for each $i$, a finite subset $\Omega_i\subset\Omega$ that is a cover of $X_i$.

Fix thus an $i$, let $f:=f_i$, and let $I:=(D:_Df)$ be the conductor ideal. For every $M\in\Max(D)$, let $Z(M):=\gamma^{-1}(M)\cap X_i=\{V\in X_i\mid \mathfrak{m}_V\cap D=M\}$, where $\gamma$ is the center map. The union of the $Z(M)$ is $X_i$; we separate the cases $I\nsubseteq M$ and $I\subseteq M$.

If $I\nsubseteq M$, then $1\in ID_M=(D_M:_{D_M}f)$, and thus $f\in D_M$; hence, in this cases $\B(f)$ contains $Z(M)$. 

Suppose $I\subseteq M$; clearly, we can suppose $Z(M)\neq\emptyset$. We claim that in this case $M$ is minimal over $I$. Indeed, if there is a $V\in Z(M)$ then $f\in V$, and thus $f\in\beta(V)$; therefore, $f\in D_P$ for every prime ideal $P\subsetneq M$ (since $D_P\supsetneq\beta(V)$ for every such $P$), and thus $I\nsubseteq P$. Therefore, $M$ is minimal over $I$. By Lemma \ref{lemma:treed-MaxR}, $I$ has only finitely many minimal primes; hence, there are only finitely many $M$ such that $I\subseteq M$ and $Z(M)\neq\emptyset$. For each of these $M$, the set of valuation domains in $X$ centered on $M$ is a subset of $\Zarmin(D_M)$, and thus it is compact; hence, for each of them, $\Omega$ admits a finite subcover $\Omega(M)$. It follows that $\Omega_i:=\{\B(f)\}\cup\bigcup\Omega(M)$ is a finite subset of $\Omega$ that is a cover of $X_i$.

Hence, $\bigcup_i\Omega_i$ is a finite subset of $\Omega$ that covers $X$; thus, $X$ is compact. Since $X$ was arbitrary, $\Zarmin(D)$ is Noetherian.

\ref{teor:global:Zar} If $\Zar(D)$ is Noetherian, then $\Spec(D)$ and every $\Zar(D_M)$ are Noetherian.

Conversely, suppose that $\Spec(D)$ is Noetherian and that $\Zar(D_M)$ is Noetherian for every $M\in\Max(D)$. By the previous point, $\Zarmin(D)$ is Noetherian. Furthermore, if $P\in\Spec(D)\setminus\Max(D)$ then $D_P$ is a valuation domain; hence, $\Zar(D)\setminus\Zarmin(D)$ is homeomorphic to $\Spec(D)\setminus\Max(D)$, which is Noetherian by hypothesis. Being the union of two Noetherian subspaces, $\Zar(D)$ itself is Noetherian.
\end{proof}

\begin{cor}\label{cor:global}
Let $D$ be an integral domain that is not a field, and let $L$ be a field containing $D$; suppose that $D$ is integrally closed in $L$. Then, $\Zar(L|D)$ (respectively $\Zarmin(L|D)$) is Noetherian if and only if the following hold:
\begin{itemize}
\item $L$ is the quotient field of $D$;
\item $\Spec(D)$ is Noetherian (resp., $\Max(D)$ is Noetherian);
\item for every $M\in\Max(D)$, the ring $D_M$ is a pseudo-valuation domain such that $\Zar(L|F)$ is Noetherian, where $F$ is the residue field of $D_M$ and $L$ is the residue field of the associated valuation overring of $D_M$.
\end{itemize}
\end{cor}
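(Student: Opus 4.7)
The plan is to combine the ``global-to-local'' reduction of Theorem~\ref{teor:global} with the pseudo-valuation domain description of Proposition~\ref{prop:Zar-pvd-noeth}, after first reducing to the case where $L$ is the quotient field of $D$. Both directions of the corollary then fall out by chaining these results together.

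For the ``only if'' direction, I would start by showing $L=K$, where $K$ is the quotient field of $D$. By Proposition~\ref{prop:noncampo} the Noetherianity of $\Zar(L|D)$ (or of $\Zarmin(L|D)$) forces $\trdeg_KL=0$, so $L$ is algebraic over $K$; but then any $x\in L$ satisfies a monic polynomial relation with coefficients in $K$, and clearing denominators by a common $d\in D$ shows that $dx$ is integral over $D$. Since $D$ is integrally closed in $L$, we get $dx\in D$, whence $x\in K$; thus $L=K$ and $\Zar(L|D)=\Zar(D)$. Now Theorem~\ref{teor:global} yields Noetherianity of $\Spec(D)$ (resp.\ $\Max(D)$) and of every $\Zar(D_M)$ (resp.\ $\Zarmin(D_M)$); Theorem~\ref{teor:pvd} upgrades each $D_M$ to a pseudo-valuation domain; and Proposition~\ref{prop:Zar-pvd-noeth} then delivers the Noetherianity of the Zariski space of the residue field extension.

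For the ``if'' direction, the assumption that $L$ is the quotient field of $D$ again reduces us to studying $\Zar(D)$ and $\Zarmin(D)$. From the local hypotheses, Proposition~\ref{prop:Zar-pvd-noeth}(a) gives that each $\Zarmin(D_M)$ is Noetherian, which together with Noetherianity of $\Max(D)$ and Theorem~\ref{teor:global}\ref{teor:global:Zarmin} settles the $\Zarmin$ case. For the $\Zar$ case I also need Noetherianity of $\Spec(V_M)$ for each associated valuation overring $V_M$, in order to invoke Proposition~\ref{prop:Zar-pvd-noeth}(b); this is automatic because $\Spec(V_M)$ and $\Spec(D_M)$ coincide (the prime ideals of a pseudo-valuation domain are exactly those of its associated valuation domain), and $\Spec(D_M)$ is a subspace of the Noetherian space $\Spec(D)$. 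Theorem~\ref{teor:global}\ref{teor:global:Zar} then concludes.

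The only genuinely delicate step in this plan is the reduction $L=K$ in the ``only if'' direction; once that is in place, the remainder is a routine assembly of Theorems~\ref{teor:global} and~\ref{teor:pvd} with Proposition~\ref{prop:Zar-pvd-noeth}, and no further obstacle is expected.
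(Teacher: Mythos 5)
Your proposal is correct and follows essentially the same route as the paper, whose proof is just the instruction to combine Proposition~\ref{prop:noncampo}, Theorem~\ref{teor:global} and Proposition~\ref{prop:Zar-pvd-noeth} (with Theorem~\ref{teor:pvd} implicitly supplying the PVD property). The details you add --- deducing $L=K$ from $\trdeg_KL=0$ via integral closedness, and transferring Noetherianity of $\Spec(D_M)$ to $\Spec(V_M)$ through the set-theoretic equality of their spectra --- are exactly the glue the paper leaves implicit (the latter identification is stated in the discussion preceding Proposition~\ref{prop:Zar-pvd-noeth}).
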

\begin{proof}
Join Proposition \ref{prop:noncampo}, Theorem \ref{teor:global} and Proposition \ref{prop:Zar-pvd-noeth}.
\end{proof}

For our last result, we recall that the \emph{valuative dimension} $\dim_v(D)$ of an integral domain $D$ is the supremum of the dimensions of the valuation overrings of $D$; a domain $D$ is called a \emph{Jaffard domain} if $\dim(D)=\dim_v(D)<\infty$, while it is a \emph{locally Jaffard domain} if $D_P$ is a Jaffard domain for every $P\in\Spec(D)$ \cite{jaffarddomains}. Any locally Jaffard domain is Jaffard, while the converse does not hold \cite[Example 3.2]{jaffarddomains}. The class of Jaffard domains includes, for example, (finite-dimensional) Noetherian domain, Pr\"ufer domains and universally catenarian domains.
\begin{prop}\label{prop:dimv}
Let $D$ be an integrally closed integral domain of finite dimension, and suppose that $\Zarmin(D)$ is a Noetherian space. Then:
\begin{enumerate}[(a)]
\item\label{prop:dimv:dimv} $\dim_v(D)\in\{\dim(D),\dim(D)+1\}$;
\item\label{prop:dimv:jaff} $D$ is locally Jaffard if and only if $D$ is a Pr\"ufer domain.
\end{enumerate}
\end{prop}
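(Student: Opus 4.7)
The plan is to reduce both parts to a local computation by exploiting Theorem \ref{teor:global} and the explicit description of $\Zar(D_M)$ when $D_M$ is a PVD.

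First I will fix notation: for each $M \in \Max(D)$, Theorem \ref{teor:global}\ref{teor:global:Zarmin} guarantees that $D_M$ is a PVD; let $V(M)$ be its associated valuation overring, and let $F_M$ and $L_M$ denote the residue fields of $D_M$ and $V(M)$ respectively. Corollary \ref{cor:global} combined with Proposition \ref{prop:trdeg02} gives $\trdeg_{F_M}L_M \leq 1$, and a standard pullback computation (using that $D$, hence $D_M$, is integrally closed) forces $F_M$ to be algebraically closed in $L_M$; in particular, if $D_M \neq V(M)$ then $\trdeg_{F_M}L_M = 1$. Since $D_M$ and $V(M)$ share the same maximal ideal they have the same spectrum (as recalled before Proposition \ref{prop:Zar-pvd-noeth}), so $\dim(D_M) = \dim(V(M))$.

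For part \ref{prop:dimv:dimv}, the key step is a dimension formula for the valuation overrings of $D_M$. By Lemma \ref{lemma:pullback-Zar}, every $W \in \Zar(D_M)$ contained in $V(M)$ is of the form $\pi^{-1}(W_0)$ for some $W_0 \in \Zar(L_M|F_M)$, and inspecting the spectrum (the primes below $M$ coincide with those of $V(M)$, while the primes above $M$ correspond to $\Spec(W_0)$) yields $\dim(\pi^{-1}(W_0)) = \dim(V(M)) + \dim(W_0)$. Since $\dim(W_0) \leq \trdeg_{F_M}L_M \leq 1$ and the remaining valuation overrings of $D_M$ are overrings of $V(M)$ (of dimension at most $\dim(V(M))$), one obtains $\dim_v(D_M) \leq \dim(D_M) + 1$. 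Using $\dim_v(D) = \sup_M \dim_v(D_M)$ and $\dim(D) = \sup_M \dim(D_M)$ then gives $\dim_v(D) \leq \dim(D) + 1$, and together with the trivial $\dim_v(D) \geq \dim(D)$ this closes the case.

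For part \ref{prop:dimv:jaff}, the implication from Pr\"ufer to locally Jaffard is immediate, since each $D_M$ would be a finite-dimensional valuation domain. For the converse I will argue contrapositively: if some $D_M$ is not a valuation domain, the setup forces $\trdeg_{F_M}L_M = 1$, so one can pick a rank-one $W_0 \in \Zar(L_M|F_M)$ (for instance by extending to $L_M$ an $(X-a)$-adic valuation of $F_M(X)$, where $X \in L_M$ is transcendental over $F_M$); the corresponding pullback $\pi^{-1}(W_0)$ is then a valuation overring of $D_M$ of dimension $\dim(D_M) + 1$, so $\dim_v(D_M) > \dim(D_M)$ and $D_M$ is not Jaffard. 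The main obstacle is checking the pullback dimension formula carefully and verifying that a rank-one $W_0$ actually exists in $\Zar(L_M|F_M)$ whenever $\trdeg_{F_M}L_M = 1$; everything else is bookkeeping with results already established.
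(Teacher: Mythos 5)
Your argument is correct, and it reaches the result by a partly different route than the paper. The skeleton is the same: localize at each maximal ideal, note that $\Zarmin(D_M)$ is Noetherian so $D_M$ is a PVD (strictly speaking this comes from Theorem \ref{teor:pvd}, not from Theorem \ref{teor:global}\ref{teor:global:Zarmin} itself, but that is a labelling quibble), and bound $\trdeg_{F_M}L_M$ by $1$ via Propositions \ref{prop:Zar-pvd-noeth} and \ref{prop:trdeg02}. Where the paper then simply cites the valuative-dimension formula for pseudo-valuation domains, $\dim_v(D_M)=\dim(D_M)+\trdeg_{F_M}L_M$ \cite[Proposition 2.9]{jaffarddomains} -- which yields \ref{prop:dimv:dimv} at once and, for \ref{prop:dimv:jaff}, forces $\trdeg_{F_M}L_M=0$ and hence $F_M=L_M$ by integral closedness -- you re-derive the two halves of that formula by hand: the upper bound from Lemma \ref{lemma:pullback-Zar} together with the composite-valuation count $\dim(\pi^{-1}(W_0))=\dim(V(M))+\dim(W_0)$, the inequality $\dim(W_0)\leq\trdeg_{F_M}L_M$, and the fact that the valuation overrings of $D_M$ not under $V(M)$ contain $V(M)$; and the lower bound needed for \ref{prop:dimv:jaff} by composing $V(M)$ with a rank-one valuation of $L_M$ trivial on $F_M$, whose existence follows since $\trdeg_{F_M}L_M=1$ once $D_M\neq V(M)$ (your observation that integral closedness makes $F_M$ algebraically closed in $L_M$ plays exactly the role that the implication $\trdeg_{F_M}L_M=0\Rightarrow F_M=L_M$ plays in the paper). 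What your route buys is self-containedness: no appeal to the external reference, only pullback and composite-valuation facts already present in or implicit in the paper. The price is that you quietly use a few structural facts about PVDs that the citation would subsume -- that $\Spec(D_M)=\Spec(V(M))$ set-theoretically (this follows from the PVD property, not merely from sharing the maximal ideal, though of course for a valuation overring the two conditions coincide) and that every valuation overring of $D_M$ is comparable with $V(M)$ -- both standard and consistent with the discussion preceding Proposition \ref{prop:Zar-pvd-noeth}, so there is no gap.
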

\begin{proof}
\ref{prop:dimv:dimv} Let $M$ be a maximal ideal of $D$. Then, $\Zarmin(D_M)$ is Noetherian, and thus $D_M$ is a pseudo-valuation domain; by \cite[Proposition 2.9]{jaffarddomains}, $\dim_v(D_M)=\dim(D_M)+\trdeg_FL$, where $F$ is the residue field of $D_M$ and $L$ is the residue field of the associated valuation ring of $D_M$. By Propositions \ref{prop:Zar-pvd-noeth} and \ref{prop:trdeg02}, $\trdeg_FL\leq 1$, and thus $\dim_v(D_M)\leq\dim(D_M)+1$. Hence, $\dim_v(D)\leq\dim(D)+1$; since $\dim_v(D)\geq\dim(D)$ always, we have the claim.

\ref{prop:dimv:jaff} If $D$ is a Pr\"ufer domain then it is locally Jaffard. Conversely, if $D$ is locally Jaffard, then $\dim_v(D_P)=\dim(D_P)$ for every prime ideal $P$ of $D$. Take any maximal ideal $M$, and let $F,L$ as above; using $\dim_v(D_M)=\dim(D_M)+\trdeg_FL$, it follows that $\trdeg_FL=0$. Since $D$ (and so $D_M$) is integrally closed, it must be $F=L$, i.e., $D_M$ itself is a valuation domain. Therefore, $D$ is a Pr\"ufer domain.
\end{proof}
Note that there are domains $D$ that are Jaffard domains and have $\Zar(D)$ Noetherian, but are not Pr\"ufer domains. Indeed, the construction presented in \cite[Example 3.2]{jaffarddomains} gives a ring $R$ with two maximal ideals, $M$ and $N$, such that $R_M$ is a two-dimensional valuation ring while $R_N$ is a one-dimensional pseudo-valuation domain with $\dim_v(R_N)=2$; in particular, it is a Jaffard domain that is not Pr\"ufer. Choosing $k=K(Z_1)$ in the construction (or, more generally, choosing $k$ such that $K(Z_1,Z_2)$ is finite over $k$), the Zariski space of $R_N$ is Noetherian (being homeomorphic to $\Zar(K(Z_1,Z_2)|k)$, which is Noetherian by Proposition \ref{prop:trdeg1}), and thus $\Zar(R)$ is Noetherian.

\end{document}